\DeclarePairedDelimiter\abs{\lvert}{\rvert}%
\DeclarePairedDelimiter\norm{\lVert}{\rVert}%
\let\oldabs\abs
\def\abs{\@ifstar{\oldabs}{\oldabs*}}
\let\oldnorm\norm
\def\norm{\@ifstar{\oldnorm}{\oldnorm*}}
\theoremstyle{definition}
\newtheorem{theorem}{Theorem}[section]
\newtheorem{lemma}[theorem]{Lemma}
\newtheorem{proposition}[theorem]{Proposition}
\newtheorem{corollary}[theorem]{Corollary}
\newtheorem{definition}[theorem]{Definition}
\newtheorem{example}[theorem]{Example}
\newtheorem{remark}[theorem]{Remark}
\newtheorem{question}{Question}
\numberwithin{equation}{section}
\DeclareMathOperator{\Cay}{Cay}
\DeclareMathOperator{\Prob}{Prob}
\DeclareMathOperator{\clarep}{ClaRep}
\let\phi\varphi
\let\empt\varnothing
\newcommand{\eps}{\varepsilon}
\newcommand{\acts}{\curvearrowright}
\newcommand{\actson}{\curvearrowright}
\newcommand{\EE}{\mathbb{E}} 
\newcommand{\RR}{\mathbb{R}}
\newcommand{\HH}{\mathbb{H}}
\newcommand{\PP}{\mathbb{P}}
\newcommand{\NN}{\mathbb{N}}     
\newcommand{\one}{\mathbbm{1}}
\newcommand\restr[2]{{
  \left.\kern-\nulldelimiterspace 
  #1 
  \vphantom{\big|} 
  \right|_{#2} 
  }}
\newcommand{\cal}[1]{{\mathcal #1}}
\newcommand\GG{\mathbb G_\bullet}
\newcommand\GGG{\mathbb G_{\bullet\bullet}}
\DeclareMathOperator{\IRE}{IRE}
\DeclareMathOperator{\FIRE}{FIRE}
\DeclareMathOperator{\AIRE}{AIRE}
\DeclareMathOperator{\Rre}{\mathcal{R}_\textrm{re}}
\DeclareMathOperator{\BVT}{BVT}
\NewDocumentCommand{\expect}{ e{^} s o >{\SplitArgument{1}{|}}m }{%
  \operatorname{\mathbb{E}}
  \IfValueT{#1}{{\!}^{#1}}
  \IfBooleanTF{#2}{
    \expectarg*{\expectvar#4}%
  }{
    \IfNoValueTF{#3}{
      \expectarg{\expectvar#4}%
    }{
      \expectarg[#3]{\expectvar#4}%
    }%
  }%
}
\NewDocumentCommand{\expectvar}{mm}{%
  #1\IfValueT{#2}{\nonscript\;\delimsize\vert\nonscript\;#2}%
}
\DeclarePairedDelimiterX{\expectarg}[1]{[}{]}{#1}
\def\l@section{\@tocline{1}{2pt}{0pc}{}{}}
\renewcommand{\tocsection}[3]{%
	\indentlabel{\@ifnotempty{#2}{\makebox[20pt][l]{%
				\ignorespaces#1 #2.\hfill}}}\sc #3\dotfill}
\newdimen{\tocsubsecmarg}
\def\l@subsection{\@tocline{2}{0pt}{0pc}{\tocsubsecmarg}{}}
\renewcommand{\tocsubsection}[3]{%
	\indentlabel{\@ifnotempty{#2}{\makebox[30pt][l]{%
				\ignorespaces#1 #2.\hfill}}}#3\dotfill}
\let\oldtocsubsection=\tocsubsection
\renewcommand{\tocsubsection}[2]{\hspace{2em} \oldtocsubsection{#1}{#2}}
\title[Exactness and the topology of the space of IREs]{Exactness and the topology of the space of invariant random equivalence relations}
\author[H\'ector Jard\'on-S\'anchez]{H\'ector Jard\'on-S\'anchez}
\address[H\'ector Jard\'on-S\'anchez]{Department of Mathematics, Jagiellonian University, Krak\'ow, Poland}
\email{hectorjardon@gmail.com}
\author[Sam Mellick]{Sam Mellick}
\address[Sam Mellick]{Department of Mathematics, Jagiellonian University, Krak\'ow, Poland}
\email{samuel.mellick@uj.edu.pl}
\author[Antoine Poulin]{Antoine Poulin}
\address[Antoine Poulin]{Department of Mathematics, McGill University, Montreal, Canada}
\email{antoine.poulin@mail.mcgill.ca}
\author[Konrad Wr\'{o}bel]{Konrad Wr\'{o}bel}
\address[Konrad Wr\'{o}bel]{Department of Mathematics, The University of Texas at Austin, USA}
\email{konrad.wrobel@austin.utexas.edu}
\subjclass{60B10, 37A20, 28D15}
\begin{document}

\begin{abstract}
  We characterize exactness of a countable group $\Gamma$ in terms of invariant random equivalence relations (IREs) on $\Gamma$. Specifically, we show that $\Gamma$ is exact if and only if every weak limit of finite IREs is an amenable IRE.  In particular, for exact groups this implies amenability of the restricted rerooting relation associated to the ideal Bernoulli Voronoi tessellation, the discrete analog of the ideal Poisson Voronoi tessellation. 
\end{abstract}

\maketitle
\vspace{-1ex}
\tableofcontents


\section{Introduction}

Let $\Gamma$ be a countable discrete group and let $\cal E \subseteq 2^{\Gamma\times \Gamma}$ denote the standard Borel space of equivalence relations on $\Gamma$. An \textbf{invariant random equivalence relation (IRE)} is a Borel probability measure $\rho$ on $\cal E$ which is invariant under the left shift action $\Gamma \actson \cal E$.  
A \textbf{finite IRE} is an IRE $\rho$ such that $\rho$-almost every equivalence relation has only finite classes. Invariant random equivalence relations (IREs) on groups were first introduced by Tucker-Drob \cite{TDthesis} and later studied by Kechris \cite{KechrisSpaceOfEqRels}. 

In this paper, we primarily concern ourselves with amenability of IREs. An \textbf{amenable IRE} is an IRE $\rho$ for which there exist Borel maps $\lambda_n\colon \cal E \to \Prob (\Gamma)$ such that for $\rho$-almost every $r\in \cal E$ and for every $g\in [e]_r$,
    \[
    \|g^{-1}.\lambda_n^r - \lambda_n^{g^{-1}.r}\|_1 \longrightarrow 0.
    \] 
For example, the \textit{full} IRE $\delta_{\Gamma\times\Gamma}$ is amenable if and only if $\Gamma$ is amenable. 

An immediate consequence of a result of Kechris \cite[Theorem 15.7]{KechrisSpaceOfEqRels} is that the full IRE on a group is amenable if and only if it is a weak limit of finite IREs. In fact, every amenable IRE is a weak limit of finite IREs (see Proposition \ref{prop:AIREsarewFIREs}). Conversely, one might also expect that weak limits of finite IREs are amenable in general. Perhaps surprisingly, this expectation fails for every nonexact group.

\begin{theorem}[see Theorem \ref{thm:3}]\label{thm:introNonexact}
    Let $\Gamma$ be a countable discrete group. If $\Gamma$ is nonexact, then there exists a weak limit of finite IREs on $\Gamma$ which is not amenable. 
\end{theorem}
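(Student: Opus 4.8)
The plan is to deduce the non-amenability of a concrete weak limit of finite IREs directly from the failure of exactness. Fix a proper left-invariant metric $d$ on $\Gamma$. Recall that $\Gamma$ is exact if and only if the coarse space $(\Gamma,d)$ has Property~A, so that nonexactness provides constants $R_0>0$ and $\eps_0>0$ with the following property: for every $S>0$ and every map $\xi\colon\Gamma\to\Prob(\Gamma)$ with $\operatorname{supp}\xi_\gamma\subseteq B(\gamma,S)$ for all $\gamma$, some pair $\gamma,\gamma'$ with $d(\gamma,\gamma')\le R_0$ satisfies $\norm{\xi_\gamma-\xi_{\gamma'}}_1>\eps_0$.

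For the construction, for each $n$ form a $\Gamma$-invariant random Voronoi tessellation of $\Gamma$ at scale $n$ — say the Voronoi cells of a $\Gamma$-invariant maximal $n$-separated subset, in which case the cells lie in balls of radius $2n$ and are finite — and let $\rho_n$ be the law of the associated \emph{restricted rerooting relation}; this is a finite IRE. (When one uses Bernoulli seeds instead, where cells can be infinite, passing to the restricted rerooting relation is exactly what restores finiteness, and the $n\to\infty$ limit is the ideal Bernoulli--Voronoi tessellation of the abstract.) Passing to a weakly convergent subsequence gives $\rho_n\to\rho$, so $\rho$ is a weak limit of finite IREs; since $\Gamma$, being nonexact, is nonamenable, $\rho$ is neither the full IRE (not a weak limit of finite IREs, by Kechris) nor trivial (by the uniform isoperimetric inequality). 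The feature the construction is designed to guarantee is that the rerooting relation propagates across neighbouring cells, so that the tessellation coherently carries the large-scale geometry of $\Gamma$.

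Suppose now, for a contradiction, that $\rho$ is amenable, witnessed by Borel maps $\lambda_n\colon\cal E\to\Prob(\Gamma)$ with $\norm{g^{-1}.\lambda_n^r-\lambda_n^{g^{-1}.r}}_1\to0$ for $\rho$-a.e.\ $r$ and all $g\in[e]_r$; we may assume $\operatorname{supp}\lambda_n^r$ sits in $[e]_r$ and, after rerooting to the approximate centre of mass, keeps a definite fraction of its mass near the root. Fix $R_0,\eps_0$ as above. Using $\Gamma$-invariance of $\rho$ (which propagates the Reiter estimate uniformly over rerootings) and Egorov's theorem applied to the finitely many $s\in B(e,R_0)$, fix $n$ and a Borel set $A$ with $\rho(A)>0$ on which $\norm{s^{-1}.\lambda_n^r-\lambda_n^{s^{-1}.r}}_1\le\eps_0/4$ whenever $s\in[e]_r\cap B(e,R_0)$; then truncate $\lambda_n^r$ to a ball of a fixed radius about the root with a further error $\le\eps_0/4$. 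Finally, selecting a fixed $r_0\in A$ and transporting the truncated measures along the tessellation $r_0$ of $\Gamma$ — using the rerooting structure to move between adjacent cells — yields a map $\xi\colon\Gamma\to\Prob(\Gamma)$ with uniformly bounded supports for which $\norm{\xi_\gamma-\xi_{\gamma'}}_1\le\eps_0$ whenever $d(\gamma,\gamma')\le R_0$, contradicting nonexactness. Hence $\rho$ is not amenable, which is the theorem.

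The substantive obstacle is this last transport step: the amenability hypothesis only provides measurable, asymptotically equivariant, a priori uncontrolled-support Reiter data along almost every class, whereas Property~A requires a genuinely uniform, finitely supported family over all of $\Gamma$. Bridging the gap requires (i) setting up the tessellation so that its rerooting structure reaches coherently across cell boundaries and carries the coarse geometry of $\Gamma$ with uniform parameters — which is precisely what the restricted-rerooting / ideal Bernoulli--Voronoi machinery accomplishes, and where one must also check the limit is nondegenerate — and (ii) the de-randomization proper: Egorov-type uniformisation, rerooting to control centres of mass, and truncation of supports at a fixed scale. The remainder is bookkeeping with the Reiter estimate.
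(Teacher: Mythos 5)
There is a genuine gap, in fact two. First, the construction: you take Voronoi cells of a maximal $n$-separated set and pass to a weak limit, but nothing in this construction forces the limit to be non-amenable. Non-triviality of the limit (which is all your Kechris/isoperimetry remarks give) is far weaker than non-amenability, and the paper explicitly leaves open whether ideal Bernoulli--Voronoi-type limits are amenable IREs (Question 1 in Section \ref{sec:IBVTQ}) --- so your choice of sequence is, as far as anyone knows, not a counterexample. The paper instead uses the combinatorial characterization of nonexactness (uniform local amenability fails, so the Cayley graph contains induced small scale expanders $G_n$) and builds FIREs by randomly scattering translates of $V(G_n)$ (Proposition \ref{thm:1}), so that with uniformly positive probability the class of the identity is a $(1-\eps)$-fraction of a small scale expander; the expansion is what ultimately obstructs amenability, via Schramm's theorem and Proposition \ref{thm:2}. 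A generic tessellation has no such expansion built into its cells.

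Second, and more fundamentally, the ``transport step'' you flag as the substantive obstacle does not just need more work --- it rests on a misconception. Amenability of an IRE is a statement about the restricted rerooting relation, which only relates $r$ to $g^{-1}.r$ for $g\in[e]_r$, i.e.\ it moves \emph{within} a single class. The Reiter data $\lambda_n^r$ therefore carries no coherence whatsoever across cell boundaries: for two nearby $\gamma,\gamma'$ lying in different cells of a fixed configuration $r_0$, the amenability hypothesis gives no comparison between the transported measures $\xi_\gamma$ and $\xi_{\gamma'}$, which is exactly what Property~A requires. (This is why, e.g., the singleton IRE is amenable on every group.) So the contradiction with nonexactness cannot be reached along this route; the non-amenability of the limit must be detected inside individual infinite classes, which is why the paper routes the argument through non-hyperfiniteness of the class of the identity (Propositions \ref{prop:conv}, \ref{prop:hyper} and Theorem \ref{prop:schramm}) rather than through Property~A on all of $\Gamma$. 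In addition, you would need the reduction to the finitely generated case (the paper does this by coinduction, Proposition \ref{prop:coinduction}), which your proposal does not address.
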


Exactness of groups is a weakening of amenability, introduced independently by Kirchberg and Wassermann \cite{kw} and by Yu \cite{yu}, who moreover proved that these groups satisfy the coarse Baum-Connes conjecture. A group $\Gamma$ is \textbf{exact} if and only if it admits a continuous action on a compact space which is topologically amenable \cite{anan,GJ,HR,ozawa}. 
For a finitely generated group, exactness is equivalent to uniform local amenability  of its Cayley graphs \cite{ULA,ElekUla}. 
Therefore bounded degree Cayley graphs of nonexact groups admit an embedded sequence of small scale expanders (see Definition \ref{def:smalscexp}). 
We prove Theorem \ref{thm:introNonexact} by constructing finite IREs on $\Gamma$ whose classes are close enough to small scale expanders so that the weak limit cannot be amenable.  

On the other hand, under the assumption of exactness, the initial expectation about amenability of weak limits of finite IREs is realized. 

\begin{theorem}[see Theorem \ref{thm:exactWFIREisAIRE}]\label{thm:introExact}
    Let $\Gamma$ be a countable discrete group. If $\Gamma$ is exact, then every weak limit of  finite IREs on $\Gamma$ is amenable. 
\end{theorem}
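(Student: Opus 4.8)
The plan is to exploit the characterization of exactness via topological amenability of an action on a compact space, and to transfer the invariant mean / approximately invariant means along the orbits of the limiting equivalence relation. Let $\rho = \lim_k \rho_k$ be a weak limit of finite IREs $\rho_k$ on $\Gamma$. Since $\Gamma$ is exact, fix a compact space $X$ with a continuous topologically amenable action $\Gamma \actson X$; equivalently, fix a sequence of continuous maps $\mu_n \colon X \to \Prob(\Gamma)$ such that $\|g.\mu_n^x - \mu_n^{g.x}\|_1 \to 0$ uniformly on compacta in $\Gamma \times X$. The first step is to build, for each $r \in \cal E$ in the support of $\rho$, a point $x(r) \in X$ in an equivariant Borel way — but of course there is no canonical such point, so instead I would work on the product: consider the space $\cal E \times X$ with the diagonal $\Gamma$-action, push the measure $\rho$ up by coupling it with \emph{any} invariant measure on $X$ (e.g. take a $\Gamma$-invariant mean is not available, but topological amenability does not require invariant measures — so instead average). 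The cleaner route: define $\lambda_n \colon \cal E \to \Prob(\Gamma)$ directly from the finite IREs $\rho_k$, since on a \emph{finite} class the uniform measure on $[e]_r$ is almost invariant in the required sense, and finiteness is detected in the weak topology only approximately.

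So the core of the argument is the following two-parameter approximation. For a finite IRE $\rho_k$, the map $r \mapsto \mathrm{unif}([e]_r)$ is an exact equivariant system of means along classes (this is the content of Proposition~\ref{prop:AIREsarewFIREs} and the finite-class case). When we pass to the weak limit $\rho$, classes may become infinite, and the uniform measures no longer make sense; the obstruction is precisely that a class of $\rho$ could "look like" a small-scale expander, as in the nonexact case. Exactness rules this out: I would use uniform local amenability of the Schreier/class graphs. Concretely, for each $r$ in the support of $\rho$ and each finite $F \subset \Gamma$, $\eps>0$, uniform local amenability produces a finite $(F,\eps)$-Følner set inside $[e]_r$ — but the point is that the \emph{size threshold} at which such a set appears is uniform over all graphs of bounded degree, and here all class graphs are subgraphs of a fixed Cayley graph (or Schreier graph of the $\Gamma$-action on $\cal E$ restricted to a generating set acting on the class). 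Then I would show that these local Følner sets can be chosen Borel-measurably in $r$ (using a Borel selection / Luzin–Novikov argument), normalize them to get $\lambda_n^r \in \Prob(\Gamma)$, and verify the defining inequality $\|g^{-1}.\lambda_n^r - \lambda_n^{g^{-1}.r}\|_1 \to 0$ for $\rho$-a.e.\ $r$ and every $g \in [e]_r$.

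To make "uniform over the limit" precise I would use that $\rho$ is a weak limit: any finite pattern (a ball of radius $R$ in the class graph around $e$, together with the indicator of which pairs are related) has $\rho$-probability equal to $\lim_k \rho_k$ of that pattern. Since the $\rho_k$ have finite classes, and since small-scale expanders cannot appear uniformly inside bounded-degree graphs of an exact group, the $R$-balls of $[e]_r$ under $\rho$ satisfy a uniform local amenability estimate inherited from the $\rho_k$ (an expander would have to already be visible at finite $k$ with fixed probability, contradicting exactness of $\Gamma$ at the level of the fixed Cayley graph). This "compactness of the local statistics" step — promoting the uniform local amenability of $\Gamma$'s Cayley graph to a uniform Følner estimate valid $\rho$-almost everywhere simultaneously, with the Følner sets depending Borel-measurably on $r$ — is the main obstacle, and is where the bulk of the technical work lies. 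The rest (constructing $\lambda_n$ from the Følner sets, checking the $\ell^1$-asymptotic invariance, measurability bookkeeping) is routine.
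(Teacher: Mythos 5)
Your opening idea --- fix a topologically amenable action $\Gamma \actson X$ on a compact space and attach a point of $X$ to each class equivariantly --- is in fact the paper's route, and you abandon it too quickly. The point you miss is that for a \emph{finite} IRE there is a natural randomized equivariant choice: plant a fixed basepoint $x_0 \in X$ at a uniformly random element of each (finite) class and propagate to the rest of the class by equivariance. This yields what the paper calls a classwise coherent $X$-marking of $\rho_n$; the set of such markings is closed under weak limits, so a subsequential limit gives a classwise coherent $X$-marking of $\rho$, and composing the marking with the Reiter maps $\eta_n\colon X \to \Prob(\Gamma)$ of the boundary action (then integrating out the marking via disintegration) produces the required $\lambda_n$. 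This is precisely where finiteness of the approximants is used structurally.

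The route you actually pursue has a genuine gap, and the clearest symptom is that finiteness of the $\rho_k$ never enters your argument in an essential way. Uniform local amenability is a property of the ambient Cayley graph: it holds for every exact finitely generated group irrespective of which IRE you consider, and it only produces, near each finite set, \emph{some} finite set with small boundary; it does not produce a measurable, coherent system of F\o lner sets along the classes of a given relation. It cannot, because that conclusion would be false: on $F_2$ (exact, so its Cayley graph is uniformly locally amenable and contains no small scale expanders) the full IRE $\delta_{\Gamma\times\Gamma}$ is not amenable, even though every finite subset of the tree admits nearby F\o lner sets. The only reason this is not a counterexample to the theorem is that $\delta_{\Gamma\times\Gamma}$ is not a weak limit of finite IREs on a nonamenable group --- exactly the hypothesis your argument does not exploit. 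Two further problems: the Hulanicki--Reiter condition must hold for \emph{every} $g \in [e]_r$, whereas graph-theoretic F\o lner sets only control the boundary inside the induced Cayley subgraph, and $[e]_r$ need not be connected there (the paper distinguishes $\cal C(o,R)$ from $\cal C_{\textrm{con}}(o,R)$ for this reason); and the theorem concerns general countable groups, while ULA is a characterization for finitely generated ones, so an extra reduction would be required. The step you flag as ``where the bulk of the technical work lies'' is not a technical step to be filled in later; it is the missing idea, and the marking construction is what supplies it.
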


A topologically amenable action on a compact space can be thought of as a boundary of $\Gamma$. 
The key idea in the proof of Theorem \ref{thm:introExact} is that any weak limit $\rho$ of finite IREs on an exact group admits a marking by boundary points which is equivariant on classes. 
The existence of such a marking then implies that $\rho$ is an amenable IRE.

Let $\IRE_\Gamma$, $\AIRE_\Gamma$, and $\FIRE_\Gamma$ denote the spaces of all IREs, amenable IREs, and finite IREs on $\Gamma$ respectively, endowed with the weak convergence topology. Since for every group $\Gamma$ every amenable IRE on $\Gamma$ is a weak limit of finite IREs, Theorems \ref{thm:introNonexact} and   \ref{thm:introExact}  give a characterization of exactness in terms of IREs, \`a la Kechris's characterization \cite[Theorem 15.7]{KechrisSpaceOfEqRels} of amenability being equivalent to $\overline{\FIRE}_\Gamma = \IRE_\Gamma$. 

\begin{corollary}
    A countable discrete group is exact if and only if $\overline{\FIRE}_\Gamma=\AIRE_\Gamma$. 
\end{corollary}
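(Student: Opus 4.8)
The plan is to assemble the corollary from Theorems \ref{thm:introNonexact} and \ref{thm:introExact} together with Proposition \ref{prop:AIREsarewFIREs}, after first recording that the topological closure $\overline{\FIRE}_\Gamma$ agrees with the set of weak limits of sequences of finite IREs. For the latter, note that $\cal E$ is a closed subset of the compact metrizable space $2^{\Gamma\times\Gamma}$, hence is itself compact metrizable, so $\Prob(\cal E)$ equipped with the weak convergence topology is again compact metrizable; in a metrizable space the closure of a set coincides with the collection of limits of convergent sequences drawn from that set. Thus $\overline{\FIRE}_\Gamma$ is exactly the set of weak limits of finite IREs on $\Gamma$, and the statement $\overline{\FIRE}_\Gamma = \AIRE_\Gamma$ means precisely that the weak limits of finite IREs are exactly the amenable IREs.

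For the forward implication, I would suppose $\Gamma$ is exact. Theorem \ref{thm:introExact} gives the inclusion $\overline{\FIRE}_\Gamma \subseteq \AIRE_\Gamma$, since every weak limit of finite IREs is amenable. Conversely, Proposition \ref{prop:AIREsarewFIREs} gives $\AIRE_\Gamma \subseteq \overline{\FIRE}_\Gamma$, since every amenable IRE is a weak limit of finite IREs. Combining these two inclusions yields $\overline{\FIRE}_\Gamma = \AIRE_\Gamma$.

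For the reverse implication I would argue by contraposition. Suppose $\Gamma$ is nonexact. Then Theorem \ref{thm:introNonexact} furnishes a weak limit $\rho$ of finite IREs on $\Gamma$ that is not amenable, so $\rho \in \overline{\FIRE}_\Gamma \setminus \AIRE_\Gamma$, whence $\overline{\FIRE}_\Gamma \neq \AIRE_\Gamma$. This completes the equivalence.

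The corollary is a purely formal consequence of the three cited results, so there is no genuine obstacle in the argument itself; all of the mathematical content is carried by Theorems \ref{thm:introNonexact} and \ref{thm:introExact} (and by Proposition \ref{prop:AIREsarewFIREs}). The one point that deserves an explicit line is the identification of the topological closure $\overline{\FIRE}_\Gamma$ with the set of sequential weak limits of finite IREs, which is exactly where metrizability of $\Prob(\cal E)$ is used so that Theorems \ref{thm:introNonexact} and \ref{thm:introExact}, both phrased in terms of weak limits, can be read as statements about $\overline{\FIRE}_\Gamma$.
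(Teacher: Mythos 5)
Your proof is correct and follows essentially the same route the paper takes: the forward direction combines Theorem \ref{thm:introExact} with Proposition \ref{prop:AIREsarewFIREs} (this is exactly Theorem \ref{thm:exactWFIREisAIRE}), and the reverse direction is the contrapositive via Theorem \ref{thm:introNonexact}. Your explicit remark identifying $\overline{\FIRE}_\Gamma$ with the set of sequential weak limits via metrizability of $\Prob(\cal E)$ is a fine (and correct) bookkeeping point that the paper leaves implicit.
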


\medskip\noindent
\textbf{Additional motivation.}
 The original impetus for this work was to better understand the proof of fixed price one for higher rank semisimple Lie groups \cite{IPVTHighRank}. There, the main object of study is the ideal Poisson Voronoi tessellation (IPVT) of a locally compact second countable group $G$. The IPVT is a weak limit of IREs with finite volume classes -- it arises as the low-intensity limit of the Voronoi tessellations associated to Poisson point processes on $G$. A key step in \cite{IPVTHighRank} is proving an amenability property of this IRE. It is a natural question to ask whether amenability is a general consequence of being a limit of IREs with finite volume classes. In Theorems \ref{thm:introNonexact} and \ref{thm:introExact} we fully resolve the discrete version of this question.  
 
 The first studied examples of the IPVT may be found in the PhD thesis of Bhupatiraju \cite{bhu}, where the model is studied on regular trees and the hyperbolic plane. The concept also appears in \cite{BCP}. An explicit description of the IPVT in hyperbolic spaces was shown in \cite{russetal}, and independently for symmetric spaces in \cite{IPVTHighRank}. Recently, in \cite{dachille} the IPVT on $\HH^2 \times \HH^2$ with the $\ell_1$ metric was given an explicit description. 

In Section \ref{sec:IBVTQ} we pose some open questions on the discrete analog of the IPVT: the ideal Bernoulli Voronoi tessellation. 

\medskip\noindent
\textbf{Acknowledgements.} The authors would like to warmly thank Miko\l aj Frączyk for suggesting the construction that appears in Theorem \ref{thm:3}, which was in turn inspired by an observation of Tom Hutchcroft. The authors are sincerely grateful to Anush Tserunyan for several fruitful remarks and suggestions. The authors would also like to thank {\L}ukasz Grabowski and Forte Shinko for various other helpful conversations. HJS, SM, and KW were supported by the Dioscuri program initiated by the Max Planck Society, jointly managed by the National Science Centre (Poland), and mutually funded by the Polish Ministry of Science and Higher Education and the German Federal Ministry of Education and Research.

\section{Preliminaries}\label{sec:prelims}

Throughout, $\Gamma$ will denote a countable discrete group. If $\Gamma \actson X$ is an action on a set $X$, we denote the action of $g \in \Gamma$ on an element $x\in X$ by $g.x$. A \textbf{probability measure preserving (pmp)} action $\Gamma \actson (X,\mu)$ is an action on a standard probability space $(X,\mu)$ satisfying $\mu(A)=\mu(g.A)$ for every measurable $A\subseteq X$ and $g\in\Gamma$. 

\subsection{Exact groups}

The reader is referred to \cite[Section 4.2 and Chapter 5]{Bro} for an introduction to topologically amenable actions and exact groups. We review the definitions necessary for our purposes here.

Let $\Prob (\Gamma)$ denote the set of probability measures on $\Gamma$ equipped with the topology inherited as a subspace of $\ell^1(\Gamma)$. Let $\Gamma \actson \Prob(\Gamma)$ be the action given by the shift ${g.\eta(h) = \eta(g^{-1}h)}$, where $g, h \in \Gamma$ and $\eta \in \Prob(\Gamma)$. 

\begin{definition}
An action of a countable group $\Gamma$ on a topological space $X$ by homeomorphisms is \textbf{topologically amenable} if it admits a sequence $(\eta_n)_{n\in \NN}$ of continuous maps $\eta_n\colon X\to \Prob(\Gamma)$ such that for each $g\in \Gamma$,
\[
\sup _{x\in X} \|g.\eta_n^x-\eta_n^{g.x}\|_1\longrightarrow 0.
\]
A countable group is \textbf{exact} if it admits a topologically amenable action on a compact metrizable space.
\end{definition}

This is not one of the standard definitions of exactness as we include the additional assumption of metrizability. However, it is equivalent for countable groups by \cite[Theorem 5.1.7]{Bro}. This characterization of exactness is often referred to as \emph{boundary amenability} or \emph{amenability at infinity}.
Many groups are exact, for instance all linear groups \cite{GHW} and hyperbolic groups \cite{AdamsRelHyp}.
Examples of nonexact groups have been produced by Gromov \cite{Grom:SpacesAndQuestions,Grom:RWRG} and Osajda \cite{osajda}. For more on exactness, see \cite{anansurvey}.

In Section \ref{sec:positive} we will use the above definition of exactness in terms of topologically amenable actions, whilst in Section \ref{sec:counter} we will make use of the combinatorial characterization of nonexactness given below.

\begin{definition}\label{def:smalscexp}
A finite graph $G$ is a \textbf{small scale $(\kappa,N)$-expander}, for some $\kappa> 0$ and $N \in \NN$, if for every $F \subseteq V(G)$ with $|F| \leq N$ we have $$
\frac{|\partial_G F|}{|F|} \geq \kappa.
$$
A sequence of finite graphs $(G_n)_{n\in \NN}$ is a \textbf{sequence of small scale $\kappa$-expanders}, for some $\kappa >0$, if each $G_n$ is a small scale $(\kappa,n)$-expander. We say that $(G_n)_{n\in \NN}$ is a sequence of \textbf{small scale expanders} if there exists some $\kappa > 0$ for which it is a sequence of small scale $\kappa$-expanders.
\end{definition}

\begin{theorem}[Brodzki--Niblo--\v{S}pakula--Willett--Wright \cite{ULA}, Elek \cite{ElekUla}]
    Let $\Gamma$ be a finitely generated group with finite generating set $S$. Then $\Gamma$ is nonexact if and only if the Cayley graph $\Cay (\Gamma , S)$ contains a sequence of small scale expanders as induced subgraphs.
\end{theorem}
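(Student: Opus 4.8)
The plan is to deduce this from the standard dictionary between exactness and property A together with the theorem of \cite{ULA} identifying property A with uniform local amenability; the only work beyond citing known results is translating the small-scale-expander condition into the latter language. Recall that for a finitely generated group, exactness is equivalent to property A of the word metric (see \cite[Chapter 5]{Bro}, \cite{HR,ozawa}): $\Gamma$ is exact precisely when, for every $R\in\NN$ and $\e>0$, there are $D\in\NN$ and a map $x\mapsto\xi_x\in\Prob(\Gamma)$ with each $\xi_x$ supported in the ball $B(x,D)$ and $\|\xi_x-\xi_y\|_1<\e$ whenever $d(x,y)\le R$. The first step is the observation that $\Cay(\Gamma,S)$ contains no sequence of small scale expanders as induced subgraphs if and only if, for every $\kappa>0$, there is an $N\in\NN$ such that no induced subgraph of $\Cay(\Gamma,S)$ is a small scale $(\kappa,N)$-expander; spelling out the negation of the expander condition, this says that for every $\kappa>0$ there is an $N$ so that every finite $A\subseteq\Gamma$ contains a nonempty $F\subseteq A$ with $|F|\le N$ and $|\partial_{G[A]}F|<\kappa|F|$, where $G[A]$ denotes the subgraph of $\Cay(\Gamma,S)$ induced on $A$. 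This is exactly uniform local amenability of $\Cay(\Gamma,S)$, so the theorem reduces to the equivalence of property A and uniform local amenability proved in \cite{ULA}, with a combinatorial alternative due to Elek \cite{ElekUla}. I would then treat the two implications separately.

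For ``exact $\Rightarrow$ no small scale expanders'' (the contrapositive of the ``$\Leftarrow$'' direction), I would fix $\kappa>0$, apply property A at scale $R=1$ with an error $\e$ small compared to $\kappa$ to obtain $D$ and the maps $\xi_x$, and set $M=|B(e,D)|$, the (finite) cardinality of the $D$-ball. The idea is to restrict this global data to a hypothetical small scale $(\kappa,N)$-expander $G[A]\subseteq\Cay(\Gamma,S)$ with $N$ large in terms of $M$ and $\kappa$, and to contradict the expansion inequality: passing the $\xi_x$ for $x\in A$ through a layer-cake decomposition and using that they vary by at most $\e$ along edges, one extracts a nonempty subset of $A$ of size at most $N$ whose boundary in $G[A]$ is smaller than $\kappa$ times its size. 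The extraction requires some care, since the $\xi_x$ need not be concentrated on $A$, but this is the more elementary half of \cite{ULA}.

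For ``no small scale expanders $\Rightarrow$ exact'', one must manufacture the global maps $\xi_x$---equivalently, a topologically amenable action of $\Gamma$ on a compact metrizable space---out of purely local F{\o}lner-type data, and this is the main obstacle. The approach of \cite{ULA} is an iterative partitioning scheme: repeatedly invoke uniform local amenability to chop $\Gamma$ into pieces of controlled diameter with small relative boundary, then assemble the resulting partitions into an approximately invariant partition of unity at the prescribed scale; Elek \cite{ElekUla} gives an alternative, more combinatorial proof. I would follow \cite{ULA} here; the remaining content of the proof is bookkeeping together with the standard equivalences between the various formulations of exactness.
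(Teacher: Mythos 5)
The paper gives no proof of this statement---it is quoted directly from the literature---and your proposal follows the same route, namely reducing to the equivalence of property A (equivalently, exactness for finitely generated groups) with uniform local amenability and checking that ``contains a sequence of small scale expanders as induced subgraphs'' is precisely the negation of the latter; that translation, and your sketch of the easy direction, are correct. The one inaccuracy is bibliographical: \cite{ULA} proves only that property A implies uniform local amenability and poses the converse as an open question, while the implication from uniform local amenability back to property A---the direction you rightly identify as the main obstacle---is due to Elek \cite{ElekUla}, so his paper is not an ``alternative, more combinatorial proof'' of the equivalence but the source of that half of it.
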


\subsection{Borel and measured combinatorics}

Let $X$ be a standard Borel space. Recall that a \textbf{countable Borel equivalence relation (cber)} on $X$ is a Borel subset $\cal R \subseteq X\times X$ defining an equivalence relation on $X$ with countable equivalence classes. If the relation has finite classes, we say it is a finite Borel equivalence relation. 

A \textbf{probability measure preserving (pmp) cber} on a standard probability space $(X,\mu)$ is a cber $\cal R$ such that $\int c_x\;d\mu=\int c^x\;d\mu$ where $c_x$ and $c^x$ are the counting measures on $\{x\}\times[x]_{\cal R}$ and $[x]_{\cal R} \times\{x\}$, respectively. 

\begin{definition}
    A cber $\cal R$ on $X$ is \textbf{Borel amenable} if there exists a sequence of \textbf{Hulanicki--Reiter functions}: a sequence of Borel maps $\eta_n \colon \cal R\to \RR_{\geq 0}$ such that
    \begin{itemize}
        \item[i)] $\eta_n^x \in \Prob ([x]_{\cal R})$ for all $x\in X$ and $n\in \NN$, where $\eta_n^x (y) \coloneqq \eta_n (y,x)$ for every $y\in [x]_{\cal R}$, and
        \item[ii)] $\|\eta_n^x - \eta_n^y \|_1 \longrightarrow 0$ for every $(y,x) \in \cal R$.
    \end{itemize}

\end{definition}

A pmp cber $\cal R$ on a standard probability space $(X,\mu)$ is \textbf{$\mu$-amenable} if its restriction to a $\mu$-conull $\cal R$-invariant Borel subset of $X$ is Borel amenable. For more on amenability of equivalence relations, the reader is referred to \cite{KM} and \cite{Moore:AmenableRelations}.

If $\Gamma\actson X$ is a Borel action of a countable group $\Gamma$ on a standard Borel space $X$, we let $\cal R_{\Gamma\actson X}$ denote the \textbf{orbit equivalence relation}. This cber is defined by letting $(y,x)\in \cal R_{\Gamma\actson X}$ if and only if there exists $g \in \Gamma$ such that $g.x = y$, for any $x,y\in X$.

\begin{lemma}\label{lem:BorelAmenableRetract}
    Let $\Gamma \acts X$ be a Borel action on a standard Borel space and ${\cal S \subseteq \cal R_{ \Gamma \actson X}}$ a Borel subequivalence relation. Suppose there exists a sequence  of Borel maps ${\eta_n\colon X \rightarrow \Prob(\Gamma)}$, for $n\in\NN$, such that for every $ x\in X$ and $g \in \Gamma$ satisfying $(g.x,x)\in \cal S$,
    \[ \|g.\eta_n^x - \eta_n^{g.x} \|_1 \longrightarrow 0.\]
    
    Then $\cal S$ is Borel amenable.
\end{lemma}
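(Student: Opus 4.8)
The plan is to build Hulanicki--Reiter functions for $\cal S$ out of the given $\eta_n$, the new ingredient being a fixed Borel \emph{retraction} of each $\cal R$-class onto the corresponding $\cal S$-class, where $\cal R := \cal R_{\Gamma\actson X}$. (Since the $\cal S$-classes lie inside the countable $\cal R$-classes, $\cal S$ is a cber and ``Borel amenable'' makes sense.) The naive attempt --- restrict $\eta_n^x$ to $\{\gamma : (\gamma^{-1}.x, x) \in \cal S\}$ and renormalize --- is doomed: the $\eta_n^x$-mass of this set may tend to $0$ even while $\eta_n$ is almost $\cal S$-equivariant, so there is nothing to renormalize. The retraction is precisely what circumvents this.

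First I would fix an enumeration $\Gamma = \{\gamma_0, \gamma_1, \dots\}$ with $\gamma_0 = e$, and define $f\colon \cal R \to X$ by $f(z,x) := \gamma_j . z$, where $j = j(z,x) := \min\{k \ge 0 : \gamma_k.z \in [x]_{\cal S}\}$. Since $z \in [x]_{\cal R}$ this minimum exists. One checks readily that $f$ is Borel; that $f(z,x) \in [x]_{\cal S}$; that $f(z,x) = z$ whenever $z \in [x]_{\cal S}$ (so $f_x := f(\cdot, x)$ genuinely retracts $[x]_{\cal R}$ onto $[x]_{\cal S}$ --- this is the ``retract'' of the name); and, most importantly, that $j(z,x)$ depends on $x$ only through the set $[x]_{\cal S}$, whence $f_x = f_y$ whenever $(y,x) \in \cal S$.

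Next, transport. For $x \in X$ let $p_x\colon \Gamma \to [x]_{\cal S}$, $p_x(\gamma) := f(\gamma^{-1}.x, x)$, and set $\zeta_n^x := (p_x)_* \eta_n^x \in \Prob([x]_{\cal S})$; this assembles into a Borel map $\zeta_n\colon \cal S \to \RR_{\ge 0}$ with $\zeta_n^x \in \Prob([x]_{\cal S})$. Given $(y,x) \in \cal S$, pick $g \in \Gamma$ with $g.y = x$ (possible as $\cal S \subseteq \cal R$); then $(g.y, y) = (x,y) \in \cal S$, so the hypothesis gives $\|g.\eta_n^y - \eta_n^x\|_1 \to 0$. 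Reindexing $\gamma = g\delta$ and using $g^{-1}.x = y$ together with $f_x = f_y$ shows that $(p_x)_*(g.\eta_n^y) = (p_y)_*\eta_n^y = \zeta_n^y$ exactly. Since pushforward does not increase the $\ell^1$-norm,
\[
\|\zeta_n^x - \zeta_n^y\|_1 = \bigl\| (p_x)_*(\eta_n^x - g.\eta_n^y) \bigr\|_1 \le \|\eta_n^x - g.\eta_n^y\|_1 \longrightarrow 0,
\]
so $(\zeta_n)_n$ is a sequence of Hulanicki--Reiter functions for $\cal S$; hence $\cal S$ is Borel amenable.

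The main obstacle is conceptual and is the one flagged above: almost $\cal S$-equivariance of $\eta_n$ controls how $\eta_n^x$ moves under $\cal S$-related group elements, but says nothing about whether its mass sits near $[x]_{\cal S}$ --- it can escape entirely. The remedy, folding $\Gamma$ (equivalently $[x]_{\cal R}$) onto $[x]_{\cal S}$ along the Borel, $\cal S$-invariant retraction $f$, turns ``escaping mass'' into harmless mass sitting inside $[x]_{\cal S}$, and only contracts distances. The one technical wrinkle is that $\eta_n^x$ must be transported along $\gamma \mapsto \gamma^{-1}.x$ and not along $\gamma \mapsto \gamma.x$: only with the inverse does the identity $(p_x)_*(g.\eta_n^y) = \zeta_n^y$ hold on the nose --- the other choice produces a conjugation by $g$ and the final estimate falls apart.
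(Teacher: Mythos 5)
Your proof is correct and is essentially the paper's argument written out in full: the paper's two-line proof also fixes a Borel $\cal S$-invariant family of retracts $[x]_{\cal R_{\Gamma\actson X}}\to[x]_{\cal S}$ (citing Lusin--Novikov, which amounts to your enumeration trick) and pushes $\eta_n^x$ forward through $\Prob([x]_{\cal R_{\Gamma\actson X}})$ to $\Prob([x]_{\cal S})$. Your explicit verification of the identity $(p_x)_*(g.\eta_n^y)=\zeta_n^y$, including the observation that one must transport along $\gamma\mapsto\gamma^{-1}.x$, correctly fills in the details the paper leaves implicit.
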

\begin{proof}   
Using Lusin-Novikov uniformization and a standard argument, we may construct a Borel ``class representative'' map $\clarep \colon X \to X^\NN$ such that, for each $y \in X$, the sequence $\clarep (y)$ contains a unique representative from each $\mathcal S$-class in $[y]_{\mathcal R_{\Gamma \actson X}}$. That is, for each $y \in X$ and $x \in [y]_{\mathcal R_{\Gamma \actson X}}$ there exists a unique $i(x;y) \in \NN$ such that $\clarep(y)_{i(x;y)} \in [x]_{\mathcal S}$. We assume that $\clarep(y)_1 = y$ for every $y \in X$.

For each $x \in X$, we define a retraction of its $\mathcal R_{\Gamma \actson X}$-class onto its $\mathcal S$-class $\phi_x \colon [x]_{\mathcal R_{{\Gamma \actson X}}} \to [x]_{\mathcal S}$ by letting, for each $y \in [x]_{\mathcal R_{\Gamma \actson X}}$, $$\phi_x (y) = \clarep(y)_{i(x;y)}.
$$
The map $\phi_x$ is $\mathcal S$-invariant  in the sense that for every $z \in [x]_{\mathcal S}$ and $y \in [x]_{\mathcal R_{{\Gamma \actson X}}}$ we have $\phi_x(y) = \phi_z(y)$. 

We define witnesses $\xi_n \colon \mathcal S \to \RR_{\geq 0}$ to Borel amenability as follows. For each $x \in X$ and $y \in [x]_{\mathcal S}$ we let $$
\xi_n^x (y) := \sum_g \eta_n^x (g^{-1}),
$$
where the sum ranges over all $g \in \Gamma$ such that $\phi_x (g.x) = y$. On the one hand, for every $x \in X$ we have
\[
\sum_{y \in [x]_{\mathcal S}} \xi_n^x (y) = \sum_{g \in \Gamma} \eta^x_n (g^{-1}) = \sum_{g \in \Gamma} \eta^x_n (g) = 1.
\]
On the other hand, for any $(g.x,x) \in \mathcal S$, by the triangle inequality and $\mathcal S$-invariance of $\phi$, \begin{align*}
\|\xi_n^x - \xi_n^{g.x}\|_1 &= \sum_{z \in [x]_{\mathcal S}} |\xi_n^x (z) - \xi_n^{g.x} (z)|\\ &\leq \sum_{h \in \Gamma} |\eta_n^x (g^{-1}h^{-1}) - \eta_n^{g.x} (h^{-1})|\\
& = \|g.\eta_n^x - \eta_n^{g.x}\|_1\to 0.
\end{align*}
It follows that $\cal S$ is Borel amenable. 
\end{proof}

A \textbf{Borel graph} on a standard Borel space $X$ is a symmetric Borel subset $\cal G \subseteq X\times X$. Say the graph has \textbf{bounded degree} if there exists $d\in\NN$ such that $\deg_{\cal G}(x)\leq d$ for every $x\in X$. For a vertex $x\in X$, let $\cal G_x$ denote its connected component in $\cal G$. A bounded degree Borel graph $\cal G$ induces a cber $\cal R _{\cal G}$ on $X$ defined by taking the vertices of each connected component to be their own equivalence class, i.e.,  $x,y\in X$ are $\cal R_{\cal G}$-equivalent if $\cal G_x = \cal G_y$.

Let $(X,\mu)$ be a standard probability space. A Borel graph $\cal G$ on $X$ is \textbf{probability measure preserving (pmp)} if $\cal R_{\cal G}$ is a pmp cber on $(X,\mu)$. If $\cal R$ is a pmp cber and $\cal G$ a Borel graph on $(X,\mu)$, we say that $\cal G$ is a \textbf{graphing} of $\cal R$ if $\cal R_{\cal G} = \cal R$ almost everywhere, i.e., for $\mu$-almost every $x \in X$, we have $[x]_{\mathcal R} = [x]_{\mathcal R_{\mathcal G}}$.

\subsection{Graph convergence}

In this subsection we recall some basic facts about Benjamini--Schramm convergence. The reader is referred to the survey paper of Aldous and Lyons \cite{AldousLyons} and the book of Lovasz \cite{Lovasz} for a thorough introduction to the topic. 

Let $\GG$ denote the set of isomorphism classes of rooted, connected, locally finite graphs. For a rooted graph $(G,u)$, let $B_r(G,u)$ denote the rooted ball of radius $r$ in $G$ about $u$. We endow $\GG$ with the complete and separable metric defined by 
\[
    d((G,u),(H,v)) = \inf\{2^{-r} : B_r (G,u) \cong B_r (H,v)\},
\]
 for $(G,u),(H,v)\in\GG$.
In a similar way we also define $\GGG$, the space of doubly rooted, connected, locally finite graphs. An element $(G, u, v)$ of $\GGG$ is a graph with two distinguished vertices $u, v \in G$ (possibly with $u = v$).

A \textbf{unimodular random graph} is a random rooted graph $(G, o)$ of $\GG$ that satisfies the Mass Transport Principle: for every Borel map $f\colon \GGG \to \RR_{\geq 0}$, 
\[
\EE\left[\sum_{v\in V(G)} f(G,v,o)\right] = \EE \left[\sum_{v\in V(G)} f(G,o,v)\right].
\]

\begin{example}[Cayley graphs]
    Let $\Gamma$ be a finitely generated group with generating set $S$. An example of a unimodular random graph is the Cayley graph $\Cay(\Gamma, S)$ rooted at the identity $e\in \Gamma$, where we treat $\Cay(\Gamma,S)$ as the random rooted graph with distribution the Dirac $\delta_{\Cay(\Gamma,S)}$. In this specific case, the mass transport principle takes the following form. If $F : \Gamma \times \Gamma \to \RR_{\geq 0}$ is diagonally invariant, then
    \[
        \sum_{g \in \Gamma} F(e, g) = \sum_{g \in \Gamma} F(g, e).
    \]   
    We will apply this to functions $F$ of the form $F(g, h) = \EE[f(g, h; \Pi)]$, where $\Pi \in X$ is a $\Gamma$-invariant stochastic process and $f: \Gamma\times\Gamma\times X \rightarrow \RR_{\geq 0}$ is diagonally invariant (that is, $f(g h, g h'; g.\Pi) = f(h, h'; \Pi)$). 
\end{example}

\begin{example}
    Let $\cal G$ on $(X,\mu)$ be a pmp bounded degree Borel graph. Then the random variable $x\in X \mapsto (\cal G_x,x)\in \GG$ is a unimodular random graph.
\end{example}

\begin{example}
    A particular instance of the latter construction is the unimodular random graph associated to a finite graph $G$, defined as $(G,v) \in \GG$, where $v\in V(G)$ is sampled from the uniform distribution on $V(G)$. 
\end{example}

    A sequence of unimodular random graphs $(G_n,o)$ \textbf{Benjamini--Schramm converges} to a unimodular random graph $(G,o)$ if $B_r (G_n,o)$ converges in distribution to $B_r (G,o)$ for every $r\in \NN$.

\subsection{Hyperfiniteness}

In this subsection we discuss hyperfiniteness of cbers and unimodular random graphs. 

A cber $\cal R$ on $X$ is \textbf{hyperfinite} if there exists an increasing union of finite Borel subequivalence relations $(\cal F_n)_n$ of $\cal R$ such that $\cal R = \bigcup_n \cal F_n$.
A pmp cber $\cal R$ on $(X,\mu)$ is $\mu$\textbf{-hyperfinite} if the restriction of $\cal R$ to a $\mu$-conull $\cal R$-invariant Borel subset is hyperfinite.
The classic article of Connes--Feldman--Weiss \cite{CFW} (building on the landmark work of Ornstein-Weiss\cite{OW80}) proves that $\mu$-amenability and $\mu$-hyperfiniteness are equivalent. The analogous purely Borel statement remains an important open question. In this paper, we will use the $\mu$-amenable perspective in Section \ref{sec:positive} and the $\mu$-hyperfinite perspective in Section \ref{sec:counter}.

A unimodular random graph $(G,o)$ is $(\eps,k)$\textbf{-hyperfinite} for $\eps>0$ and $k\in \NN$ if there exists a coupling $(G,E,o)$ with $E\subseteq E(G)$ such that \begin{enumerate}
    \item $\EE\big[\mathrm{deg}_E(o)\big] \leq \eps$ and
    \item $G- E$ has connected components of size at most $k$ almost surely. 
\end{enumerate}
Say that a unimodular random graph $(G,o)$ is \textbf{hyperfinite} if for every $\eps > 0$ there exists $k\in \NN$ such that $(G,o)$ is $(\eps,k)$-hyperfinite.

We will make use of the following well-known connection between these notions which follows immediately from \cite[Proposition 2.2]{Elek:FinGraphsAmen} and \cite[Theorem 1.1]{CGMTD}.

\begin{proposition}\label{prop:hyper}
    Let $\cal R$ be a pmp cber on $(X,\mu)$ and $\cal G$ a bounded degree graphing of $\cal R$. Then $\cal R$ is $\mu$-hyperfinite if and only if the unimodular random graph $(\cal G_x,x)$ is hyperfinite.
\end{proposition}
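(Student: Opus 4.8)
The plan is to show both implications by passing between the three equivalent perspectives: $\mu$-hyperfiniteness of $\cal R$, $(\eps,k)$-hyperfiniteness of the unimodular random graph $(\cal G_x,x)$, and the asymptotic combinatorial notion of hyperfiniteness for the associated sequence of finite graphs via Benjamini--Schramm approximation. The cited ingredients are \cite[Proposition 2.2]{Elek:FinGraphsAmen}, which characterizes hyperfiniteness of a unimodular random graph in terms of the existence, for every $\eps>0$, of a coupling deleting an $\eps$-fraction of edges to leave bounded components, and \cite[Theorem 1.1]{CGMTD}, which relates $\mu$-hyperfiniteness of a pmp cber to this kind of edge-removal property of a bounded degree graphing. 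So most of the work is bookkeeping to match hypotheses rather than new argument.

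First I would fix a bounded degree Borel graphing $\cal G$ of $\cal R$ on $(X,\mu)$ and form the unimodular random graph $(\cal G_x,x)$, as in the second Example of the Graph convergence subsection, noting that $\mu$ being pmp and $\cal R_{\cal G}=\cal R$ a.s. is exactly what makes this map into $\GG$ a unimodular random graph. For the forward direction, assume $\cal R$ is $\mu$-hyperfinite, so after discarding a $\mu$-null $\cal R$-invariant set we have an increasing exhaustion $\cal R = \bigcup_n \cal F_n$ by finite Borel subequivalence relations. Given $\eps>0$, I would use this exhaustion to produce the required coupling $(\cal G, E, o)$: for $n$ large enough the set of edges of $\cal G$ joining distinct $\cal F_n$-classes has small expected degree at the root (this is a standard consequence of the Borel--Cantelli / dominated convergence argument, using that every edge of $\cal G$ eventually lies inside a single $\cal F_n$-class, together with the pmp mass transport normalization to control the expected degree), and removing exactly those edges leaves the components of $\cal G - E$ contained in $\cal F_n$-classes, hence finite; bounding their size by some $k$ requires one further truncation, removing the (small measure) set of points lying in $\cal F_n$-classes of size exceeding $k$. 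This shows $(\cal G_x,x)$ is $(\eps,k)$-hyperfinite for suitable $k$, and as $\eps$ was arbitrary, it is hyperfinite.

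For the converse, assume $(\cal G_x,x)$ is hyperfinite. By \cite[Proposition 2.2]{Elek:FinGraphsAmen} this is equivalent to hyperfiniteness of the Benjamini--Schramm limit in the finite-graph sense, and then \cite[Theorem 1.1]{CGMTD} upgrades the $(\eps,k)$-edge-removal couplings to an honest increasing Borel exhaustion of $\cal R$ by finite subequivalence relations on a conull invariant set, i.e. $\mu$-hyperfiniteness of $\cal R$; alternatively one invokes Connes--Feldman--Weiss to identify $\mu$-hyperfiniteness with $\mu$-amenability and checks the Hulanicki--Reiter functions directly from the couplings, but citing \cite{CGMTD} is cleaner. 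The only subtlety worth flagging is that the statement of \cite[Theorem 1.1]{CGMTD} and that of \cite[Proposition 2.2]{Elek:FinGraphsAmen} are phrased for slightly different objects (one for graphings, one for unimodular or finite graphs), so the main — though still routine — obstacle is verifying that the notions of "hyperfinite" in these two references coincide under the dictionary provided by Benjamini--Schramm convergence and the unimodularity of $(\cal G_x,x)$; once that identification is made, the equivalence is immediate.
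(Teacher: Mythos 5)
Your proposal is correct and follows the same route as the paper, which states the proposition as an immediate consequence of the same two references (\cite[Proposition 2.2]{Elek:FinGraphsAmen} and \cite[Theorem 1.1]{CGMTD}) without writing out a proof. Your additional sketch of the forward direction (removing inter-$\cal F_n$-class edges, controlling the expected root degree by dominated convergence and mass transport, and truncating large classes to get a uniform size bound $k$) is the standard argument and is sound.
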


Schramm proved the following witnessing hyperfiniteness of a limit in the converging sequence.

\begin{theorem}[Schramm {\cite[Theorem 1.2]{schramm}}]\label{prop:schramm}
    Let $(G_n,o)$ be a sequence of unimodular random graphs Benjamini--Schramm converging to $(G,o)$. If $(G,o)$ is hyperfinite, then for every $\eps > 0$ there exists $k\in \NN$ such that for $n$ large enough $(G_n,o)$ is $(\eps,k)$-hyperfinite.
\end{theorem}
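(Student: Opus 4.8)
The plan is to use the general principle that ``finitary'' local data is preserved under Benjamini--Schramm convergence, combined with the fact that in a hyperfinite graph a cheap partition into small pieces can be produced by a local rule. The subtlety to overcome is that a cheap partition of the limit $(G,o)$ may a priori be described by a very non-local recipe, so it does not obviously induce cheap partitions of the $G_n$; the point, due to Schramm, is that hyperfiniteness lets one take the partition local, after which it transfers for free.

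So fix $\eps>0$. Since $(G,o)$ is hyperfinite, choose $k_0$ with $(G,o)$ being $(\eps/4,k_0)$-hyperfinite, witnessed by a coupling $(G,E_0,o)$. Discarding from $E_0$ every edge whose two endpoints lie in the same component of $G-E_0$ does not increase $\deg_{E_0}(o)$ and leaves the components of $G-E_0$ unchanged, so we may assume $E_0$ is exactly the edge boundary of a partition $\mathcal P_0$ of $V(G)$ into connected pieces of size at most $k_0$, with $\EE[\deg_{E_0}o]\le\eps/4$. The first step, which I expect to be the main obstacle, is to upgrade $\mathcal P_0$ to a \emph{finitary} partition: after enriching $G$ with i.i.d.\ labels $\ell\colon V(G)\to[0,1]$, a partition $\mathcal P$ of $V(G)$ into connected pieces of size at most some $k_1\ge k_0$, with edge boundary $E$ satisfying $\EE[\deg_E o]\le\eps/2$, and such that a single rule computes the piece containing any $v$ from the rooted labelled ball $(B_r(G,v),\ell|_{B_r(v)})$ for a fixed radius $r$. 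Heuristically this should be possible because a cheap partition of a hyperfinite graph can be grown by a local, greedy-type procedure with the labels breaking ties, whereas no cheap bounded partition exists at all past a fixed scale when the graph is non-amenable. Making this rigorous --- running such a procedure with a hard cutoff at radius $r$, and, wherever the cutoff forces an inconsistency or would create a piece of more than $k_1$ vertices, discarding the output there and subdividing into singletons, then checking that for $r$ large the extra edge boundary created has expectation at most $\eps/4$ --- is the technical heart, and is essentially Schramm's argument; I would try to organise it via amenability of the associated pmp equivalence relation and an Ornstein--Weiss-type exhaustion over an i.i.d.\ base.

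The second step is routine transfer. Enrich each $(G_n,o)$ with i.i.d.\ labels; then $(G_n,\ell,o)$ Benjamini--Schramm converges to $(G,\ell,o)$. Apply the radius-$r$ rule of Step 1 to each $(G_n,\ell)$ to obtain an edge set $E_n^0$, and let $E_n$ consist of $E_n^0$ together with all edges incident to a vertex whose component in $G_n-E_n^0$ has more than $k_1$ vertices; then every component of $G_n-E_n$ has at most $k_1$ vertices by construction, so it only remains to bound $\EE[\deg_{E_n}o]$. Writing $N$ for the size of the component of $o$ in $G_n-E_n^0$, one has $\deg_{E_n}(o)\le\deg_{E_n^0}(o)+\deg_{G_n}(o)\,\one[N>k_1]$; the expectation of the first term is a function only of the law of the radius-$(r+1)$ ball at $o$, and the event $\{N>k_1\}$ is determined by the radius-$(r+k_1+2)$ ball at $o$, so both are continuous along the convergent sequence. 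On the limit $(G,\ell,o)$ the rule reproduces $\mathcal P$ exactly, so the first expectation equals $\EE[\deg_E o]\le\eps/2$ while $\Prob(N>k_1)=0$ (pieces of $\mathcal P$ have size $\le k_1$); hence $\limsup_n\EE[\deg_{E_n}o]\le\eps/2<\eps$, and $(G_n,o)$ is $(\eps,k_1)$-hyperfinite for all large $n$, as required.
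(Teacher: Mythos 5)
The paper does not prove this statement; it is imported verbatim from Schramm's \emph{Hyperfinite graph sequences}, so there is no internal proof to compare against. Judged on its own terms, your architecture is the right one and is essentially how this theorem is proved: replace the (a priori non-local) witness of hyperfiniteness of the limit by a bounded-radius factor-of-iid rule, then transfer that rule to the $G_n$ by local weak convergence. Your Step 2 is complete and correct: conditionally on the isomorphism type of a fixed-radius ball the label distribution is literally the same iid measure for $G_n$ and $G$, so $\EE[\deg_{E_n^0}(o)]$ and $\PP[N>k_1]$ converge exactly to their values on the limit, where they are at most $\eps/2$ and $0$ respectively, and your modification $E_n^0\mapsto E_n$ correctly forces all components to have size at most $k_1$ at an asymptotically vanishing extra cost.

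The gap is Step 1, which you correctly identify as the technical heart but then only assert: the localization lemma \emph{is} the theorem, and "a greedy local procedure with a hard cutoff" is not yet an argument. Two points need care. First, the witnessing coupling $(G,E_0,o)$ need not be measurable with respect to $(G,\ell)$ at all (not every unimodular coupling is a factor of iid), so you cannot directly condition $E_0$ on labelled balls; instead, lift the problem: the orbit/connectivity relation on the iid-labelled space is a class-bijective extension of a hyperfinite one, hence hyperfinite, so you may take the witnessing partition to be a genuinely Borel function of $(G,\ell)$ from the outset. Second, once the piece-of-the-root map $f$ is Borel in $(G,\ell)$, approximate it in measure by a cylinder (radius-$r$) function $f'$ — via martingale convergence along the ball filtration or Lusin — and repair inconsistencies by declaring $v$ a singleton unless $v\in f'(v)$, $|f'(v)|\le k_1$, and $f'(u)$ is equal to or disjoint from $f'(v)$ for every $u\in B_{2k_1}(v)$. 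The repaired rule is finitary and consistent by fiat, and every new boundary edge is incident to a vertex within distance $2k_1$ of a disagreement of $f$ and $f'$, so the added boundary density is at most $d\cdot|B_{2k_1}|\cdot\PP[f\neq f']$, which you make smaller than $\eps/4$ by taking $r$ large. With that lemma written out, your proof is complete; without it, the proposal is a correct reduction rather than a proof.
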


Finally, it will be useful to study (non-random) finite graphs. In that context, we say that a finite graph $G$ is $(\eps,k)$\textbf{-hyperfinite} if there exists a subset of edges $E \subseteq E(G)$ with $|E|\leq \eps |V(G)|$ such that $G\backslash E$ has connected components of size at most $k$. Hyperfiniteness of $G$ in this sense is equivalent to hyperfiniteness of the associated unimodular random graph $(G,v)$ \cite[Lemma 1.3]{schramm}.

\section{Invariant random equivalence relations}\label{sec:IRE}

Let $\Gamma$ be a countable group. Denote by $\cal E_\Gamma $ the set of equivalence relations on $\Gamma$ and equip it with the topology inherited as a subspace of $\{0,1\}^{\Gamma \times \Gamma}$. The group $\Gamma$ acts on $\cal E_\Gamma$ by the left shift, i.e., $g . r (h,h') = r (g^{-1}h, g^{-1}h')$
for $g, h,h' \in \Gamma$. 
We will often simply write $\cal E$, instead of $\cal E_\Gamma$.

\begin{definition}
    An \textbf{invariant random equivalence relation (IRE)} is a Borel probability measure $\rho$ on $\cal E$ which is invariant under the action $\Gamma \actson \cal E$. 
    \end{definition}

The space $\Prob (\cal E)$ of Borel probability measures on $\cal{E}$ is a compact metrizable space when endowed with the weak topology, hence in particular sequentially compact. The subspace $\IRE_\Gamma \subseteq \Prob (\cal E)$ of IREs is closed, so it is also sequentially compact. 
IREs have previously been studied by Tucker-Drob \cite{TDthesis} and Kechris \cite{KechrisSpaceOfEqRels}.

\begin{remark}[IREs as random variables]

Although we defined IREs as measures on the space $\mathcal E$, we will often abuse terminology by saying that a random variable $R \colon (\Omega, \PP) \to \mathcal E$ is an IRE if its \textbf{distribution} $\rho:= R_\ast (\PP)$ is an IRE, where $(\Omega,\PP)$ is a standard probability space. When using this notation, if $A \subset \mathcal E$ is Borel, we write $$
\PP [R \in A]:= \PP[\{\omega \in \Omega \mid R(\omega) \in A \}]=\rho(A).
$$
This abuse of terminology follows similar conventions about invariant random subgroups (IRS) in the literature (see for instance \cite{AbertGlasnerVirag,sevenSamurai}). Since $\cal E$ is compact metrizable, convergence in distribution of a sequence $R_n$ is equivalent to weak convergence of $(R_n)_*\PP$ in $\Prob(\cal E)$.

Section \ref{sec:positive} is heavily motivated by the theory of amenable equivalence relations (and more generally groupoids).  Treating IREs as measures yields proofs aligned with the literature on measured equivalence relations. 

On the other hand, Section \ref{sec:counter} relies on Benjamini--Schramm convergence and random tessellations which have natural descriptions as stochastic processes. Regarding IREs as random variables here allows us to reduce our analyses to standard methods in probability theory.
\end{remark}

\begin{example}[Cosets]
    Suppose $\Lambda \leq \Gamma$ is a subgroup. The left coset decomposition of $\Gamma$ defines an invariant equivalence relation (as $x\Lambda = y\Lambda$ if and only if $gx\Lambda = gy\Lambda$), and hence the Dirac mass on the left coset decomposition defines an IRE. Note that the two extreme cases of $\Lambda = \Gamma$ and $\Lambda = 1$ correspond to the full IRE $\delta_{\Gamma\times \Gamma}$ and the equality IRE $\delta_\Delta$ respectively, where $\Delta \subset \Gamma \times \Gamma$ denotes the diagonal. 

    One can also consider the \emph{right} coset decomposition. This defines an equivariant map from the space of subgroups of $\Gamma$ to the space of equivalence relations on $\Gamma$. In this way, one can produce examples of IREs from invariant random subgroups by taking the pushforward.
\end{example}

\begin{example}[Percolation] \label{ex:perc}
    Given an invariant percolation process on $\Gamma$ (such as Bernoulli bond percolation), we may turn it into an IRE by associating to each configuration the equivalence relation of being in the same connected component and pushing forward the distribution of the percolation process. 
\end{example}

\begin{example}\label{ex:subrel}
    Fix a pmp action $\Gamma\acts (X,\mu)$ and a Borel subequivalence relation $\cal{S}\subseteq\cal R_{\Gamma\acts X}$. This naturally produces an IRE by first pulling back $\cal S|_{[x]_{\cal R}\times[x]_ {\cal R}}$ through the map $g \mapsto g.x$ to an equivalence relation $S_x$ on $\Gamma$, and then pushing forward the measure $\mu$ through $x\mapsto S_x$. In fact, every IRE arises in this fashion (see Proposition 8.3 of \cite{TDthesis} and Proposition 15.3 of \cite{KechrisSpaceOfEqRels}).
\end{example}

In the following proposition we relate Benjamini--Schramm convergence of graphs to weak convergence of IREs, so we opt for a random variable approach. Let $\cal C$ be a Cayley graph of $\Gamma$. If $R$ is an IRE, we let $\cal C_{\textrm{con}} (o, R)$ denote the connected component of the identity $e$ in the subgraph of $\cal C$ induced by $[e]_R$ and rooted at $o=e$. It is easy to see that $\cal C_{\textrm{con}} (o,R)$ is unimodular when $R$ is an IRE.

\begin{proposition}\label{prop:conv}
    Let $\Gamma$ be a finitely generated group with bounded degree Cayley graph $\cal C$ and $R_n$ a sequence of IREs on $\Gamma$ converging in distribution to an IRE $R$. Then the unimodular random graphs $\cal C _{\textrm{con}}(o,R_n)$ Benjamini--Schramm converges to $\cal C _{\textrm{con}}(o,R)$.
\end{proposition}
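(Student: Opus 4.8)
The plan is to reduce Benjamini--Schramm convergence of the induced-component graphs $\cal C_{\textrm{con}}(o, R_n)$ to the assumed weak convergence $R_n \to R$ of the IREs themselves, using the fact that, for each fixed radius $r$, the ball $B_r(\cal C_{\textrm{con}}(o, R))$ is determined by the restriction of $R$ to a fixed finite subset of $\Gamma \times \Gamma$. Concretely, fix $r \in \NN$. Since $\cal C$ has bounded degree, the ball of radius $r$ about $e$ in $\cal C$ involves only finitely many vertices, say $B_r(\cal C, e) \subseteq \Gamma$; but to know the connected component of $e$ inside the induced subgraph on $[e]_R$ up to radius $r$, we only need to know which vertices of $B_r(\cal C, e)$ lie in $[e]_R$ — equivalently, the values $r(e, x)$ for $x \in B_r(\cal C, e)$. (If some vertex $x$ at distance $\le r$ in $\cal C$ fails to be $R$-equivalent to $e$, it and everything reachable only through it is simply absent from $\cal C_{\textrm{con}}(o, R)$.) Thus there is a Borel function $\Phi_r \colon \cal E \to \GG$ with $\Phi_r(r) = B_r(\cal C_{\textrm{con}}(o, r))$, and $\Phi_r$ factors through the projection $\cal E \to \{0,1\}^{F_r}$ for the finite set $F_r = \{e\} \times B_r(\cal C, e)$.

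Next I would argue that $\Phi_r$ is continuous. The projection $\cal E \to \{0,1\}^{F_r}$ is continuous by definition of the product topology on $\{0,1\}^{\Gamma \times \Gamma}$, $\{0,1\}^{F_r}$ is finite (hence discrete), and $\GG$ carries the metric topology described in the preliminaries; any function out of a discrete space is continuous, so $\Phi_r$ is continuous. Consequently, for every bounded continuous $\psi \colon \GG \to \RR$, the composition $\psi \circ \Phi_r$ is a bounded continuous function on $\cal E$, and weak convergence $R_n \to R$ gives $\EE[\psi(\Phi_r(R_n))] \to \EE[\psi(\Phi_r(R))]$. Since $B_r(\cal C, e)$ is finite the space of possible values of $\Phi_r$ is finite (there are only finitely many graphs on a bounded vertex set), so convergence of expectations of all bounded continuous test functions is the same as convergence in distribution of the $\GG$-valued random variable $\Phi_r(R_n) = B_r(\cal C_{\textrm{con}}(o, R_n))$ to $\Phi_r(R) = B_r(\cal C_{\textrm{con}}(o, R))$. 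As $r$ was arbitrary, this is exactly the definition of Benjamini--Schramm convergence $\cal C_{\textrm{con}}(o, R_n) \to \cal C_{\textrm{con}}(o, R)$.

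The remaining thing to check is that each $\cal C_{\textrm{con}}(o, R_n)$ and $\cal C_{\textrm{con}}(o, R)$ is genuinely a unimodular random graph, which is the assertion the excerpt already flags as ``easy to see''; I would verify the Mass Transport Principle directly from left-invariance of the IRE, transporting mass along the Cayley graph and observing that whether two vertices lie in the same induced component is a diagonally invariant function of the configuration, as in the Cayley graph example. I do not expect any real obstacle here: the only mild subtlety is bookkeeping the difference between ``distance $\le r$ in $\cal C$'' and ``distance $\le r$ in $\cal C_{\textrm{con}}(o, R)$'' — the latter can only be larger — so that $F_r$ chosen from $\cal C$ is comfortably large enough to determine $B_r(\cal C_{\textrm{con}}(o, R))$; picking $F_r = \{e\} \times B_r(\cal C, e)$ (or even being slightly generous) handles this cleanly. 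The main conceptual point, and the one step worth stating carefully, is the factorization of $B_r(\cal C_{\textrm{con}}(o, \cdot))$ through a finite coordinate projection, which is what turns weak convergence into the radius-by-radius convergence required.
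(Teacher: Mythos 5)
Your proposal is correct and follows essentially the same route as the paper: both arguments rest on the observation that the radius-$r$ ball of $\cal C_{\textrm{con}}(o,R)$ is determined by the restriction of $R$ to the finite set $\{e\}\times B_r(\cal C,e)$, so the relevant events are finitely determined and clopen (equivalently, your map $\Phi_r$ is continuous with finite range), and their probabilities converge under weak convergence. The paper phrases this as expressing $\{B_r(G_n,o)\cong (H,e)\}$ as a finite union of clopen sets rather than via a continuous finitely-determined map, but the content is identical.
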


\begin{proof}

Denote $(G_n,o) \coloneqq \cal C_{\textrm{con}} (o,R_n)$ and $(G,o) \coloneqq \cal C_{\textrm{con}} (o,R)$ in order to ease notation. Since all the unimodular random graphs considered are supported on induced subgraphs of $\cal C$, it suffices to show that 
\[
    \PP\big[B_r (G_n,o) \cong (H,e)\big] \longrightarrow  \PP\big[B_r (G,o) \cong (H,e)\big]
\]
for every $r\geq 0$ and every finite connected induced subgraph $H\subseteq \cal C$ containing the identity.

Fix such an $r$ and subgraph $H$ and let $\cal F$ be the family of subsets $F\subseteq B_r (\cal C, e)$ such that $(\cal C [F], e) \cong (H,e)$ where $\cal C[F]$ is the subgraph of $\mathcal C$ induced by $F$. Then
\[
     \PP[B_r (G_n,o) \cong (H,e)] = \PP[\bigcup_{F\in \cal F} \{\text{conn.~comp.~of}\ o\ \text{in}\ \cal C[[e]_{R_n} \cap B_r (\cal C,e)]\ \text{is}\ F\}],
\]
    and similarly for $(G,o)$. The evaluated event is a finite union of clopen sets, so is clopen in $\cal E$. The conclusion follows.
\end{proof}

\subsection{Finite and amenable IREs}
Let us now formally introduce the main objects of interest in this paper. 

\begin{definition}\label{def:amenIREs}
    A \textbf{finite invariant random equivalence relation (FIRE)} is an IRE $\rho$ such that $\rho$-almost every $r\in \cal E$ has all classes finite.
    
    An \textbf{amenable invariant random equivalence relation (AIRE)} is an IRE $\rho$ for which there exist Borel maps $\lambda_n\colon \cal E \to \Prob (\Gamma)$ such that for $\rho$-almost every $r\in \cal E$ and for every $g\in [e]_r$,
    \[
    \|g^{-1}.\lambda_n^r - \lambda_n^{g^{-1}.r}\|_1 \longrightarrow 0.
    \] 
\end{definition}
Denote by $\FIRE_\Gamma$ (resp. $\AIRE_\Gamma$) the subset of $\IRE_\Gamma$ consisting of finite (resp. amenable) IREs. 

\begin{remark}
    Applying this definition to the full IRE $\delta_{\Gamma\times\Gamma}$ gives precisely the definition of amenability of $\Gamma$ in terms of Hulanicki-Reiter sequences. In the other direction, every IRE on an amenable group is amenable.
\end{remark}

These definitions originally arose from studying the restricted rerooting relation.

\begin{definition}
The \textbf{restricted rerooting relation} is a Borel subequivalence relation $\Rre \subseteq \cal R_{\Gamma \actson \cal E}$. It is defined by
\[
\Rre \coloneqq \{(r,g^{-1}.r)\in \cal E \times \cal E : g\in [e]_r\}.
\] 
\end{definition}

Call an IRE $\rho$ is \textit{free} if the action $\Gamma \actson (\cal E,\rho)$ is $\rho$-essentially free. A free IRE $\rho$ is finite (resp. amenable) if and only if $\Rre$ is finite $\rho$-almost everywhere (resp. $\rho$-amenable).

\begin{remark}\label{rmk:groupoid}
In general, for a nonfree IRE the restricted rerooting relation may be amenable while the IRE is not amenable, e.g.,  the full IRE $\delta_{\Gamma\times\Gamma}$ on a nonamenable group. One can maintain a meaningful connection to amenability by instead considering the \textit{restricted rerooting groupoid} $\cal G_{re}\coloneqq \{(g,r)\in \Gamma \times \cal E : g\in [e]_r\}$ which inherits its groupoid structure as a subgroupoid of the action groupoid $\Gamma\ltimes \cal E$. Now, an IRE $\rho$ is amenable if and only if $\cal G_{re}$ is $\rho$-amenable as a discrete pmp groupoid. We won't make use of this characterization directly, but it motivates several propositions in this paper.
\end{remark}

In this paper, we are primarily interested in the relationship between $\overline{\FIRE}_\Gamma $ and $\AIRE_\Gamma$. We now prove the general fact that $\AIRE_\Gamma\subseteq \overline{\FIRE}_\Gamma $ for every countable group $\Gamma$.

\begin{proposition}\label{prop:AIREsarewFIREs}
    Let $\Gamma$ be a countable group. Every AIRE on $\Gamma$ is a weak limit of FIREs.
\end{proposition}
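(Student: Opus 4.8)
The plan is to start with an AIRE $\rho$ together with a sequence of Borel maps $\lambda_n \colon \cal E \to \Prob(\Gamma)$ witnessing amenability, and use these to construct, for each $n$, a FIRE $\rho_n$ such that $\rho_n \to \rho$ weakly. The conceptual picture is exactly the one behind the Connes--Feldman--Weiss / Ornstein--Weiss philosophy: an amenable equivalence relation can be exhausted by finite subequivalence relations, and restricting to an increasing sequence of such finite subrelations yields FIREs converging to $\rho$. The main work is to package this in the IRE formalism and to arrange measurability and invariance of the construction.

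First, I would pass to the measured-relation picture via the last example of Section~\ref{sec:IRE}: realize $\rho$ as coming from a subequivalence relation $\cal S \leq \cal R_{\Gamma \actson X}$ on some free pmp action $\Gamma \actson (X,\mu)$, by pulling back $\cal S|_{[x]\times[x]}$ to $\Gamma$. (If $\rho$ is not free one works on $(\cal E,\rho)$ itself using the restricted rerooting groupoid $\cal G_{re}$ of Remark~\ref{rmk:groupoid}, but the free case carries the idea.) The amenability maps $\lambda_n$ for $\rho$ translate into Hulanicki--Reiter functions for $\cal S$, so by Connes--Feldman--Weiss $\cal S$ is $\mu$-hyperfinite: there is an increasing sequence of finite Borel subequivalence relations $\cal F_1 \leq \cal F_2 \leq \cdots$ with $\bigcup_k \cal F_k = \cal S$ up to a $\mu$-null set. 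Each $\cal F_k$, being a finite subrelation of $\cal R_{\Gamma\actson X}$ that refines $\cal S$, pulls back to a FIRE $\rho_k$ on $\Gamma$ exactly as $\cal S$ pulled back to $\rho$: send $x$ to the equivalence relation on $\Gamma$ given by $\{(\gamma_1,\gamma_2) : (\gamma_1.x,\gamma_2.x)\in \cal F_k\}$, which indeed has finite classes and is left-invariant in distribution.

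It then remains to check $\rho_k \to \rho$ weakly in $\Prob(\cal E)$. Since $\cal E \subseteq \{0,1\}^{\Gamma\times\Gamma}$ carries the product topology, weak convergence is equivalent to convergence of the finite-dimensional distributions: for any finite set $P \subseteq \Gamma\times\Gamma$ and any prescribed pattern on $P$, the $\rho_k$-probability of that pattern converges to the $\rho$-probability. Because $\cal F_k \uparrow \cal S$ $\mu$-almost everywhere, for each pair $(\gamma_1,\gamma_2)$ the event ``$(\gamma_1.x,\gamma_2.x) \in \cal F_k$'' increases to ``$(\gamma_1.x,\gamma_2.x)\in \cal S$'' up to null sets, and since $P$ is finite this gives convergence of the joint pattern probabilities on $P$ by monotone/dominated convergence. (One small point: $\cal F_k$ being a \emph{subrelation} of $\cal S$ means the pattern on $P$ under $\rho_k$ is pointwise dominated by that under $\rho$, so there is no oscillation to control — the events are nested.) Hence $\rho_k \to \rho$ weakly, and each $\rho_k \in \FIRE_\Gamma$, proving $\rho \in \overline{\FIRE}_\Gamma$.

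The main obstacle I anticipate is purely bookkeeping rather than conceptual: making sure the pullback construction $\cal F_k \mapsto \rho_k$ is genuinely Borel and $\Gamma$-invariant, and handling the non-free case cleanly (where one should work directly with the restricted rerooting groupoid and invoke groupoid amenability $\Leftrightarrow$ hyperfiniteness, or alternatively inflate $\rho$ by an auxiliary free pmp action to reduce to the free case). A secondary subtlety is that the $\mu$-hyperfinite exhaustion $\cal F_k$ is only defined on a conull invariant set, so one must restrict and then note that the pulled-back measures are unaffected; this is routine. I expect the whole argument to be short once the translation to measured combinatorics is set up, since the heavy lifting (amenable $\Rightarrow$ hyperfinite) is imported from \cite{CFW}.
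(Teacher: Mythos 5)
Your proposal is correct and follows essentially the same route as the paper: the paper realizes $\rho$ via the diagonal action on $Z \times \cal E$ with $Z$ an auxiliary essentially free pmp action (precisely the ``inflation'' you mention for handling the non-free case), applies Lemma \ref{lem:BorelAmenableRetract} and \cite{CFW} to get an exhaustion by finite subrelations, and pushes these back to FIREs converging to $\rho$. The only difference is cosmetic: the paper verifies convergence by noting the equivariant maps $\sigma_n(z,r)$ converge pointwise to $r$ in $\cal E$ (hence in distribution), which is the same as your finite-dimensional-distribution argument.
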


\begin{proof}
    Let $\rho$ be an amenable IRE on $\Gamma$ and let $\Gamma \actson (Z,\nu)$ be an essentially free pmp action. Consider the diagonal action $\Gamma \actson (Z\times \cal E,\nu\times \rho)$. Similarly to the restricted rerooting relation, define a cber  $\cal R \subseteq \cal R_{\Gamma \actson Z\times \cal E}$ by letting $((x,r),(y,t))\in \cal R$ exactly when there exists $g \in [e]_r$ such that $(g^{-1}.x,g^{-1}.r) = (y,t)$. 
    
    By the definition of an AIRE, there exists a sequence of Borel functions $\lambda_n: \cal E\rightarrow \Prob(\Gamma)$ such that for $\rho$-almost every $r \in \cal E$ and for every $g \in [e]_r$, 
    \[
    \|g^{-1}.\lambda_n^r - \lambda_n^{g^{-1}.r}\|_1 \longrightarrow 0.
    \] 
    Define maps $\eta_n: Z \times {\cal E} \rightarrow \Prob(\Gamma)$ by $\eta_n(x,r) = \lambda_n(r)$. Thus, for $(\nu \times \rho)$-almost every $(x,r)$ and for every $g \in \Gamma$ such that $(g.x,g.r)$ and $(y,t)$ are $\cal R$ equivalent, we have
    \[
    \|g.\eta_n(x,r) - \eta_n(g x, g r) \|_1 
    = \|g.\lambda_n^{r} - \lambda_n^{g.r} \|_1 
    \longrightarrow 0.
    \]
    This is exactly the hypothesis of Lemma \ref{lem:BorelAmenableRetract} (on a suitable co-null set), which we apply to see that $\cal R$ is $(\nu\times \rho)$-amenable.
    
    By \cite{CFW}, we may find an increasing sequence  $(\cal F_n)_{n\in\NN}$ of finite Borel subequivalence relations of $\cal R$ such that $\cal R = \bigcup_n \cal F_n$ up to a null set. Define $\sigma_n \colon Z\times \cal E \to \cal E$ by letting $(g,h)\in \sigma_n (x,r)$ if $(g^{-1}.(x,r), h^{-1}.(x,r))\in \cal F_n$ for $g,h \in \Gamma$ and $(x,r)\in Z\times \cal E$. One can verify that the $\sigma_n$ are Borel, $\Gamma$-equivariant, and that $(\sigma_n)_*(\nu\times \rho)$ is a FIRE. Here, we explicitly use the essential freeness of $\Gamma \acts Z$, since otherwise classes may not be finite.

    Since $F_n$ increases to $\cal R$ on a $(\nu \times \rho)$-conull set, for almost every $(x,r)\in Z\times \cal E$ we have that $\sigma_n (x,r) \longrightarrow r$ in $\cal E$. From almost sure convergence, one has convergence in distribution and thus, as IREs, $(\sigma_n)_*(\nu\times \rho) \longrightarrow P_\ast(\nu\times \rho)$, where $P \colon Z \times \cal E \to \cal E$ denotes the projection to the $\cal E$-coordinate. 
    This concludes the proof as $P_\ast(\nu\times\rho)=\rho$.
\end{proof}

\subsection{Coinduced IREs}

Coinduction is a method for producing actions of a larger group from an action of a subgroup. In the special case where the action being coinduced is an IRE, it is possible to view the action as an IRE of the ambient group. We make use of this construction later to reduce a question about general countable groups to finitely generated ones.

Given a subgroup $\Lambda \leq \Gamma$ and $\rho$ an IRE on $\Lambda$, define the \textbf{coinduced IRE} $\mathrm{CInd}_\Lambda^\Gamma(\rho)$ on $\Gamma$ in the following fashion. 
Fix a section $\sigma:\Gamma/\Lambda\to \Gamma$. 
Consider the measure space $(\mathcal E_\Lambda^{\Gamma/\Lambda}, \rho^{\Gamma/\Lambda})$ and define a map $\Phi^\sigma: \mathcal E_\Lambda^{\Gamma/\Lambda}\to \mathcal E_\Gamma$ given by $(g,h)\in \Phi^\sigma(\{r_k\}_{k\in \Gamma/\Lambda})$ if and only if $g\Lambda=h\Lambda$ and $(\sigma(g\Lambda)^{-1}g,\sigma(g\Lambda)^{-1}h)\in r_{g\Lambda}$. We then set $\mathrm{CInd}_\Lambda^\Gamma(\rho)=\Phi^\sigma_*\rho^{\Gamma/\Lambda}$. 

We compile a few properties of coinduction that we will use in the proof of Theorem \ref{thm:3}.

\begin{proposition}\label{prop:coinduction}
    Let $\Lambda \leq \Gamma$ be a subgroup and $\rho$ an IRE on $\Lambda$. Then:
\begin{enumerate}
    \item $\mathrm{CInd}_\Lambda^\Gamma(\rho)$ does not depend on the choice of section $\sigma: \Gamma/\Lambda\to \Gamma$.
    \item $\mathrm{CInd}_\Lambda^\Gamma(\rho)$ is an IRE on $\Gamma$.
    \item $\rho$ is finite if and only if $\mathrm{CInd}_\Lambda^\Gamma (\rho)$ is finite.
    \item $\rho$ is amenable implies $\mathrm{CInd}_\Lambda^\Gamma (\rho)$ is amenable.
    \item The map $\mathrm{CInd}_\Lambda^\Gamma$ is continuous with respect to the weak topology.
\end{enumerate}
\end{proposition}
\begin{proof}
(1) Fix two sections $\sigma,\tau:\Gamma/\Lambda\to \Gamma$. We may define an isomorphism $\Psi: \mathcal E_\Lambda^{\Gamma/\Lambda}\to \mathcal E_\Lambda^{\Gamma/\Lambda}$ given by $\Psi((r_k)_{k\in \Gamma/\Lambda})=(\sigma(k)^{-1}\tau(k).r_k)_{k\in \Gamma/\Lambda}$. Observe that $\Psi_*\rho^{\Gamma/\Lambda}=\rho^{\Gamma/\Lambda}$ and $\Phi^{\sigma}\circ \Psi=\Phi^{\tau}$ and so the definition of $\mathrm{CInd}_\Lambda^\Gamma(\rho)$ is independent of the choice of section.

(2) To show that $\mathrm{CInd}_\Lambda^\Gamma(\rho)$ is an IRE, fix any section $\sigma: \Gamma/\Lambda\to \Gamma$ and consider the action $\Gamma\acts \mathcal E_\Lambda^{\Gamma/\Lambda}$ given by $(g.\bold r)_k=\phi(k)^{-1}g\phi(g^{-1}k).r_{g^{-1}k}$ for $\bold r\in \mathcal E_\Lambda^{\Gamma/\Lambda}$. Observe that $\rho^{\Gamma/\Lambda}$ is invariant under this action and $\Phi\coloneqq\Phi^\sigma$ is now a $\Gamma$-equivariant map to the shift on $\mathcal E_\Gamma$.

Invariance of $\rho^{\Gamma/\Lambda}$ is clear since $g
\in \Gamma$ permutes coordinates and acts on each coordinate by an element of $\Lambda$. 
As for equivariance, $(h,h')\in g.\Phi(\bold r)$ if and only if $g^{-1}h\Lambda=g^{-1}h'\Lambda$ and $(\sigma(g^{-1}h\Lambda)^{-1}g^{-1}h,\sigma(g^{-1}h\Lambda)^{-1}g^{-1}h')\in r_{g^{-1}h\Lambda}$. On the other hand, $(h,h')\in \Phi(g.\bold r)$ if and only if $h\Lambda=h'\Lambda$ and $(\sigma(h\Lambda)^{-1}h, \sigma(h\Lambda)^{-1}h')\in (g.\bold r)_{h\Lambda}=\sigma(h\Lambda)^{-1}g\sigma(g^{-1}h\Lambda).r_{g^{-1}h\Lambda}$. Therefore, $g.\Phi(\bold r)=\Phi(g.\bold r)$ and so $\mathrm{CInd}_\Lambda^\Gamma(\rho)=\Phi_*\rho^{\Gamma/\Lambda}$ is an IRE.

(3) For an IRE on a countable group, all classes finite almost surely is equivalent to the class of the identity is finite almost surely. It is clear that the distribution of the size of the class of the identity with respect to $\rho$ is the same as the distribution of the size of the class of the identity with respect to $\mathrm{CInd}_\Lambda^\Gamma(\rho)$. Therefore one is a finite IRE if and only if the other is.

(4) Suppose $\rho$ is amenable witnessed by the sequence $\lambda_n: \mathcal E_\Lambda\to \Prob(\Lambda)\subseteq \Prob(\Gamma)$. Let $P: \mathcal E_\Gamma\to \mathcal E_\Lambda$ be the restriction to $\Lambda\times \Lambda$ and observe that $P_*\mathrm{CInd}_\Lambda^\Gamma(\rho)=\rho$.
Define $\widetilde \lambda_n: \mathcal E_\Gamma\to \Prob(\Lambda)\subseteq \Prob(\Gamma)$ by $\widetilde \lambda_n^r=\lambda_n^{Pr}$. For $\mathrm{CInd}_\Lambda^\Gamma(\rho)$-almost every $r$ and any $g\in [e]_r$, we have $g\in \Lambda$ and $P(g^{-1}.r)=g^{-1}.(Pr)$, hence
\[
\|g^{-1}.\widetilde{\lambda}_n^r-\widetilde{\lambda}_n^{g^{-1}.r}\|_1=\|g^{-1}.\lambda_n^{Pr}-\lambda_n^{g^{-1}.Pr}\|_1\longrightarrow 0
\]
the maps $\widetilde{\lambda}_n$ witness amenability of $\mathrm{CInd}(\rho)$.

(5) For continuity, it suffices to test on cylinder sets $A\subseteq \cal E_\Gamma$. Fix such an $A$ determined by a finite set $S\subseteq \Gamma$ and an equivalence relation $E$ on $S$. 
If $E$ identifies some $g,h\in S$ with $g\Lambda\not=h\Lambda$, then $\mathrm{CInd}_\Lambda^\Gamma(\mu)(A)=0$ for every $\mu$, so assume otherwise. 

Fix a section $\sigma: \Gamma/\Lambda\to \Gamma$. Let $K=\{k\in \Gamma/\Lambda: \phi(k)\Lambda\cap S\not=\varnothing\}$ and let $S_k=\sigma(k)^{-1}(\sigma(k)\Lambda\cap S)\subseteq \Lambda$ for $k\in K$. 
For each $k\in K$, define the clopen set $A_k\coloneqq\{r\in \cal E_\Lambda: r\restriction_{S_k\times S_k}=(\phi(k)^{-1}.E)\restriction_{S_k\times S_k}\}$
Then $(\Phi^\sigma)^{-1}(A)=\Pi_{k\in K} A_k\times \Pi_{k\not \in K}\cal E_\Lambda$, so for any probability measure $\mu$ on $\cal E_\Lambda$,
\[
\mathrm{CInd}_\Lambda^\Gamma(\mu)(A)=\mu^{\Gamma/\Lambda}((\Phi^\sigma)^{-1}(A))=\Pi_{k\in K}\mu(A_k).
\]
Since $\rho_n\to \rho$ weakly, we have $\rho_n(A_k)\to \rho(A_k)$ for each $k\in K$, and since $K$ is finite the products converge $\mathrm{CInd}_\Lambda^\Gamma(\rho_n)(A)=\Pi_{k\in K}\rho_n(A_k)\longrightarrow\Pi_{k\in K}\rho(A_k)= \mathrm{CInd}_\Lambda^\Gamma(\rho)(A)$.
\end{proof}

\section{IREs on exact groups}\label{sec:positive}

Proposition \ref{prop:AIREsarewFIREs} showed that AIREs arise as weak limits of FIREs in general. The goal of this section is to prove the reverse containment $\overline{\FIRE}_\Gamma\subseteq \AIRE_\Gamma$ for exact groups (Theorem \ref{thm:exactWFIREisAIRE}). We first give a sketch of the proof.

Suppose the group $\Gamma$ is exact, and let $(\rho_n)_n$ be a sequence of FIREs weakly converging to an IRE $\rho$. Fix a topologically amenable action $\Gamma \acts X$ on a compact metric space. We use finiteness to produce a coupling of each $\rho_n$ with an invariant random $X$-marking of $\Gamma$ which has the property that the mark of any element of $\Gamma$ determines equivariantly the mark of any other restricted rerooting related element. We call this property ``classwise coherence.'' Any subsequential weak limit of these markings is still a classwise coherent invariant random $X$-marking for $\rho$. Finally, we use this marking to pull back amenability of the action $\Gamma \acts X$ to $\rho$ itself.

In the definition below, let $\Gamma\acts \Xi$ be a Borel action on a standard Borel space. We consider the action on $\Xi^\Gamma$ with the usual left shift action, that is,
\[
    (g. f)(x) = f(g^{-1}x) \text{ for all } f \in \Xi^\Gamma \text{ and } g, x \in \Gamma.
\]
Additionally, we let $\Gamma$ act on $\cal{E} \times \Xi^\Gamma$ diagonally.

\begin{definition}
    Let $\Gamma\acts \Xi$ be a Borel action on a standard Borel space $\Xi$ and let $\rho$ be an IRE on $\Gamma$. A \textbf{classwise coherent} $\Xi$-marking of $\rho$ is a $\Gamma$-invariant Borel probability measure $\widetilde{\rho}$ on  ${\cal E} \times \Xi^\Gamma$ such that
    \begin{itemize}
        \item the projection of $\widetilde{\rho}$ to $\cal{E}$ is $\rho$, and
    \item  for $\widetilde{\rho}$ almost every $(r, f)$ and $(g,h) \in r$, we have $g.[f(g)] = h.[f(h)]$. 
    \end{itemize}
\end{definition}

 Note that if $\widetilde{\rho}$ is a classwise coherent marking of $\rho$, then the mark of any $g \in \Gamma$ determines the mark at any other related element $h\in \Gamma$ as $f(h) = h^{-1}g.[f(g)]$.

\begin{example}\label{example:firemarking}
    Let $\Gamma\actson \Xi$ be a Borel action on a standard Borel space $\Xi$ and let $\rho$ be a FIRE on $\Gamma$. Fix a point $x_0$ in $\Xi$. We construct a classwise coherent $\Xi$-marking of $\rho$ by marking a uniformly random point in each class by $x_0$, and then extending the marking to all other points following the requirements of classwise coherence. Explicitly, we can construct this measure as follows. Consider the (partially defined) map ${\Phi : \cal{E} \times [0,1]^\Gamma \to \cal{E} \times \Xi^\Gamma}$ given by $\Phi(r, \omega) = (r, \phi_{r,\omega})$, where
    \[
        \phi_{r,\omega}(g) = g^{-1}v_{g,r,\omega}.x_0,
    \]
    and $v_{g,r,\omega}$ is the element of $[g]_r$ whose label (with respect to $\omega$) is highest. Note that $\Phi$ is defined $\rho \otimes \texttt{Leb}^\Gamma$ almost surely. Then the desired classwise coherent $\Xi$-marking is the pushforward measure $\Phi_*(\rho \otimes \texttt{Leb}^\Gamma)$.
\end{example}

The following proposition shows that an IRE that admits a classwise coherent marking by a topologically amenable action is itself amenable.

\begin{proposition}\label{prop:amenabilitycriterion}
Let $\Gamma \actson X$ be a topologically amenable action on a Polish space $X$ and let $\rho$ be an IRE on $\Gamma$. Assume $\widetilde{\rho}$ is a classwise coherent $X$-marking of $\rho$. Then:

\begin{enumerate}
    \item There exist Borel maps $\xi_n\colon {\cal E} \times X^\Gamma \to \Prob (\Gamma)$ such that for $\widetilde{\rho}$-almost every $(r,f)$ and for every $g \in [e]_r$,
    \[
    \|g^{-1}.\xi_n^{(r,f)} - \xi_n^{g^{-1}.(r,f)}\|_1 \longrightarrow 0, \text{ and}
    \] 
    \item $\rho$ is an AIRE.
\end{enumerate}
\end{proposition}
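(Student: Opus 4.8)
The plan is to prove (1) first and then deduce (2) as a short consequence. For (1), let $(\eta_n)_{n \in \NN}$ be the sequence of continuous maps $\eta_n \colon \Xi \to \Prob(\Gamma)$ witnessing topological amenability, so that $\sup_{x \in \Xi} \|\gamma.\eta_n^x - \eta_n^{\gamma.x}\|_1 \to 0$ for each fixed $\gamma \in \Gamma$. Given a marked relation $(r,f) \in \cal E \times \Xi^\Gamma$, the natural candidate is to transport the measure associated to the mark at the identity, defining
\[
    \xi_n^{(r,f)} \coloneqq \eta_n^{f(e)}.
\]
This is Borel in $(r,f)$ since $f \mapsto f(e)$ is continuous and $\eta_n$ is continuous. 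I would then compute, for $g \in [e]_r$, the quantity $\|g^{-1}.\xi_n^{(r,f)} - \xi_n^{g^{-1}.(r,f)}\|_1$. Here $\xi_n^{g^{-1}.(r,f)} = \eta_n^{(g^{-1}.f)(e)} = \eta_n^{g^{-1}.f(g)}$ by the shift action on $\Xi^\Gamma$. So the expression becomes $\|g^{-1}.\eta_n^{f(e)} - \eta_n^{g^{-1}.f(g)}\|_1$.

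The key input is local equivariance: since $(e,g) \in r$ (equivalently $g \in [e]_r$ with $e$ also in the class), classwise coherence gives $e.[f(e)] = g.[f(g)]$, i.e. $f(e) = g.f(g)$ as points of $\Xi$. Substituting, the quantity to control is $\|g^{-1}.\eta_n^{g.f(g)} - \eta_n^{f(g)}\|_1$, which with $x \coloneqq f(g)$ and $\gamma \coloneqq g^{-1}$ reads $\|\gamma.\eta_n^{\gamma^{-1}.x} - \eta_n^{x}\|_1 = \|\eta_n^{\gamma^{-1}.x} - \gamma^{-1}.\eta_n^{x}\|_1$, which is exactly (a relabeling of) $\sup_{y \in \Xi}\|\gamma^{-1}.\eta_n^y - \eta_n^{\gamma^{-1}.y}\|_1 \to 0$ for the fixed group element $g$. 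The point is that although the base point of $\Xi$ varies with $(r,f)$, the \emph{group element} $g$ witnessing the relation is fixed, so the uniform-over-$\Xi$ convergence in the definition of topological amenability applies. This yields (1) — note the convergence here is in fact uniform over $(r,f)$ and $g$ once $g$ is fixed, and holds everywhere, not just $\widetilde\rho$-almost everywhere.

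For (2), I would push forward. Let $P \colon \cal E \times \Xi^\Gamma \to \cal E$ be the projection; by hypothesis $P_*\widetilde\rho = \rho$. The maps $\xi_n$ are not functions of $r$ alone, so one cannot directly disintegrate; instead, use the standard trick of realizing the marking as extra randomness. Disintegrate $\widetilde\rho = \int_{\cal E} \widetilde\rho_r \, d\rho(r)$ over $\rho$, where $\widetilde\rho_r$ is a probability measure on $\{r\} \times \Xi^\Gamma$, and define $\lambda_n^r \coloneqq \int \xi_n^{(r,f)} \, d\widetilde\rho_r(f) \in \Prob(\Gamma)$, which is Borel in $r$ by measurability of disintegration. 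Then for $\rho$-a.e. $r$ and every $g \in [e]_r$, $\Gamma$-invariance of $\widetilde\rho$ identifies $g^{-1}.\widetilde\rho_r$ with $\widetilde\rho_{g^{-1}.r}$ (up to the obvious identifications), so
\[
    \|g^{-1}.\lambda_n^r - \lambda_n^{g^{-1}.r}\|_1 \leq \int \|g^{-1}.\xi_n^{(r,f)} - \xi_n^{g^{-1}.(r,f)}\|_1 \, d\widetilde\rho_r(f) \longrightarrow 0
\]
by (1) and dominated convergence (the integrands are bounded by $2$). Hence $\rho$ is an AIRE.

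I expect the main obstacle to be purely bookkeeping: making the shift action on $\Xi^\Gamma$ interact correctly with the diagonal $\Gamma$-action on $\cal E \times \Xi^\Gamma$ and getting the inverses and sides consistent in the local equivariance identity $f(h) = h^{-1}g.f(g)$, together with checking the disintegration/invariance compatibility $g^{-1}.\widetilde\rho_r = \widetilde\rho_{g^{-1}.r}$ carefully enough that the final estimate is legitimate $\rho$-almost everywhere. The analytic content — that uniform topological amenability survives a varying base point because the witnessing group element is fixed — is the conceptual heart and is short once the setup is aligned.
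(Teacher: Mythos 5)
Your proposal is correct and follows essentially the same route as the paper: part (1) via $\xi_n^{(r,f)} \coloneqq \eta_n^{f(e)}$ combined with local equivariance and the fixed group element $g$, and part (2) via disintegration of $\widetilde{\rho}$ over $\rho$, averaging $\xi_n$ against the fibre measures, and dominated convergence. One bookkeeping slip: with the paper's (untwisted) shift on $\Xi^\Gamma$ one has $(g^{-1}.f)(e) = f(g)$, not $g^{-1}.[f(g)]$ — but the quantity you actually bound, $\|g^{-1}.\eta_n^{g.f(g)} - \eta_n^{f(g)}\|_1$, is the correct one, so the argument is unaffected.
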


\begin{proof}
     For part (1), by definition of topological amenability there exists a sequence of continuous maps $\eta_n \colon X\to \Prob (\Gamma)$ such that for each $g\in \Gamma$,
     \[
    \sup_{x\in X} \|g.\eta_n^x - \eta_n^{g.x} \|_1 \longrightarrow 0.
    \]

    We define $\xi_n \colon {\cal E}
    \times X^\Gamma \to \Prob (\cal E)$ by $\xi_n^{(r,f)} \coloneqq \eta_n^{f(e)}$.
    For every $g \in [e]_r$, 
    \[
        \|g^{-1} . \xi_n^{(r,f)} -  \xi_n^{g^{-1}.(r, f)}\|_1 = \|g^{-1}.\eta_n^{f(e)} - \eta_n^{f(g)} \|_1
        = \|g^{-1}.\eta_n^{f(e)} - \eta_n^{g^{-1}.[f(e)]} \|_1 \longrightarrow 0
    \]
    where we use classwise coherence for $e.[f(e)] = g.[f(g)]$.

    Part (2) follows from (1) by a standard argument as in, for instance \cite[Proposition 2.5 (ii)]{JKL} or \cite[Proposition 17.9]{KechrisSpaceOfEqRels}, which we include here. 
    
    Take the measure disintegration $(\rho_r)_{r\in\mathcal{E}}$ of $\widetilde{\rho}$ over the projection $P : \cal{E} \times X^\Gamma \to \cal{E}$, so that $\widetilde{\rho} = \int_\cal E \rho_r d\rho (r)$ where $\rho_r \in \Prob (\cal E \times X^\Gamma)$ concentrates on $P^{-1}(r)$. By uniqueness of disintegration and invariance of $\widetilde{\rho}$ and $\rho$, we have that $
    g_\ast \rho_r = \rho_{g.r}$ for every $g\in \Gamma$ and $\rho$-almost every $r \in \cal{E}$.
    
    For each $r\in \cal E$ and $n\in \NN$, define $\alpha_n^r \coloneqq \int_{\cal E \times X^\Gamma} \xi_n^{(t,f)} d\rho_r (t,f)\in \Prob(\Gamma)$.
    Observe that for almost every $r\in \cal E$, for every $g\in [e]_r$ we have $$
    g^{-1}.\alpha_n^r - \alpha_n^{g^{-1}.r} = \int_{\cal E \times X^\Gamma} g^{-1}.\xi_n^{(t,f)} - \xi_n^{g^{-1}.(t,f)} d\rho_r (t,f).
    $$
    It follows from the dominated convergence theorem that for $\rho$-almost every $r\in \cal E$ and every $g\in [e]_r$, we have $\|g^{-1}.\alpha_n^r - \alpha_n^{g^{-1}.r}\|_1 \longrightarrow 0$.
\end{proof}

\begin{remark}
    In light of Remark \ref{rmk:groupoid}, one may think of part (1) of Proposition \ref{prop:amenabilitycriterion} as proving Borel amenability of a certain extension of the restricted rerooting groupoid. The proof of part (2) of the proposition then shows that if a pmp extension of a groupoid is amenable, then the groupoid itself must be amenable. 
\end{remark}

We now have all the tools necessary for the proof of the main theorem of this section.

\begin{theorem}\label{thm:exactWFIREisAIRE}
    Let $\Gamma$ be an exact countable group. Then $\overline{\FIRE}_\Gamma  = \AIRE_\Gamma$.
\end{theorem}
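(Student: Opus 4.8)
The plan is to prove the two inclusions $\AIRE_\Gamma \subseteq \overline{\FIRE}_\Gamma$ and $\overline{\FIRE}_\Gamma \subseteq \AIRE_\Gamma$ separately. The first inclusion is exactly Proposition \ref{prop:AIREsarewFIREs} and requires no exactness hypothesis, so the work is entirely in the reverse direction. For that, fix a topologically amenable action $\Gamma \actson X$ on a compact metrizable space, which exists by exactness, and let $(\rho_n)_n$ be a sequence of FIREs weakly converging to an IRE $\rho$; by sequential compactness of $\IRE_\Gamma$ it suffices to handle such sequential limits.

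First I would, for each $n$, build a classwise coherent $X$-marking $\widetilde{\rho}_n$ of $\rho_n$. This is precisely the construction of Example \ref{example:firemarking}: fix a basepoint $x_0 \in X$, mark the $\omega$-maximal element of each (finite!) $\rho_n$-class by $x_0$, and propagate to all other class members by local equivariance via $\phi_{r,\omega}(g) = g^{-1}v_{g,r,\omega}.x_0$, setting $\widetilde{\rho}_n = \Phi_*(\rho_n \otimes \texttt{Leb}^\Gamma)$. Each $\widetilde{\rho}_n$ is a $\Gamma$-invariant Borel probability measure on $\cal E \times X^\Gamma$ projecting to $\rho_n$ on $\cal E$. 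Next I would pass to a weak limit: since $X$ is compact metrizable, $X^\Gamma$ is compact metrizable, hence $\cal E \times X^\Gamma$ is compact metrizable and $\Prob(\cal E \times X^\Gamma)$ is sequentially compact in the weak topology, so after passing to a subsequence $\widetilde{\rho}_n \to \widetilde{\rho}$ weakly for some $\widetilde{\rho} \in \Prob(\cal E \times X^\Gamma)$. This limit is $\Gamma$-invariant (each $\widetilde{\rho}_n$ is and the $\Gamma$-action on $\cal E \times X^\Gamma$ is by homeomorphisms), its projection to $\cal E$ is the weak limit of the $\rho_n$, namely $\rho$, and — this is the key point — classwise coherence passes to the limit. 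Classwise coherence says that for every $(g,h) \in r$ we have $g.[f(g)] = h.[f(h)]$; equivalently, for every fixed $g, h \in \Gamma$, on the event $\{(g,h) \in r\}$ the equality $g.[f(g)] = h.[f(h)]$ holds $\widetilde{\rho}$-almost surely. For fixed $g,h$ this is a closed condition cut out inside a clopen subset of $\cal E \times X^\Gamma$ (the map $(r,f) \mapsto (g.[f(g)], h.[f(h)])$ is continuous into $X \times X$, and $\{(g,h)\in r\}$ is clopen in $\cal E$), so it is preserved under weak limits; intersecting over the countably many pairs $(g,h)$ shows $\widetilde{\rho}$ is classwise coherent.

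With $\widetilde{\rho}$ a classwise coherent $X$-marking of $\rho$ and $\Gamma \actson X$ topologically amenable, Proposition \ref{prop:amenabilitycriterion}(2) directly yields that $\rho$ is an AIRE, completing the inclusion $\overline{\FIRE}_\Gamma \subseteq \AIRE_\Gamma$ and hence the theorem. I would remark that the argument only used that $\Gamma$ is finitely generated in the convergence bookkeeping nowhere essentially — the marking construction and the weak-limit argument work for any countable $\Gamma$ once one fixes a compact metrizable topologically amenable action, which is available for every countable exact group.

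The main obstacle I anticipate is the clean verification that classwise coherence is a closed property preserved under weak convergence. One must be careful that the ``almost every related pair'' formulation is handled correctly: it is cleanest to fix each pair $(g,h) \in \Gamma \times \Gamma$ in advance, observe that $A_{g,h} := \{(r,f) : (g,h) \in r \implies g.[f(g)] = h.[f(h)]\}$ is closed (indeed clopen, being the union of the open set $\{(g,h) \notin r\}$ with the closed set $\{g.[f(g)] = h.[f(h)]\}$ — provided one checks the action maps $X^\Gamma \to X$, $f \mapsto f(g)$ are continuous, which they are as coordinate projections, and the $\Gamma$-action $X \to X$ is by homeomorphisms), that $\widetilde{\rho}_n(A_{g,h}) = 1$ for all $n$ by classwise coherence of $\widetilde{\rho}_n$, and hence $\widetilde{\rho}(A_{g,h}) \geq \limsup_n \widetilde{\rho}_n(A_{g,h}) = 1$ by the portmanteau theorem for closed sets. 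Then $\widetilde{\rho}\big(\bigcap_{g,h} A_{g,h}\big) = 1$, which is exactly classwise coherence. The only other point needing a line of care is that the $\cal E$-marginal of the weak limit $\widetilde{\rho}$ is indeed $\rho$: this follows since the projection $\cal E \times X^\Gamma \to \cal E$ is continuous, so it pushes the weak limit $\widetilde{\rho}$ to the weak limit of the pushforwards, which is $\lim_n \rho_n = \rho$.
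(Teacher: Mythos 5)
Your proposal is correct and follows essentially the same route as the paper: construct the classwise coherent $X$-markings of the $\rho_n$ via Example \ref{example:firemarking}, extract a subsequential weak limit in $\Prob(\cal E \times X^\Gamma)$, check that classwise coherence and the marginal condition survive the limit, and conclude via Proposition \ref{prop:amenabilitycriterion}; your portmanteau argument is exactly the detail behind the paper's assertion that the set of classwise coherent markings is closed. One trivial slip: the set $A_{g,h}$ is closed (a union of a clopen set with a closed set) but need not be clopen as you parenthetically claim — closedness is all the portmanteau step requires, so nothing is affected.
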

\begin{proof}
    We have that $\overline{\FIRE}_\Gamma  \supseteq \AIRE_\Gamma$ by Proposition \ref{prop:AIREsarewFIREs}, so we must show that weak limits of FIREs are AIREs.
    
    Let $\Gamma$ be an exact group and $\rho_n$ a sequence of FIREs weakly converging to $\rho$. We fix a topologically amenable action $\Gamma \acts X$ on a compact metric space. We will construct a classwise coherent $X$-marking of $\rho$. 

    Let $\widetilde{\rho_n}$ be the classwise coherent $X$-marking constructed from $\rho_n$ as in Example \ref{example:firemarking}. 
    Since $\Prob(\cal E \times X^\Gamma)$ is sequentially compact, there is a (subsequential) weak limit $\widetilde{\rho}$. Then $\widetilde{\rho}$ is a classwise coherent $X$-marking of $\rho$: it projects onto $\rho$ by weak convergence and continuity of the projection map. Classwise coherence follows from the fact that the set of classwise coherent $X$-markings of $\rho$ is closed. By Proposition \ref{prop:amenabilitycriterion}, $\rho$ is an AIRE.
\end{proof}

\section{Nonexact groups and weak limits of FIREs}\label{sec:counter}

The aim of this section is to prove a strong converse to Theorem \ref{thm:exactWFIREisAIRE} by showing that it fails in every nonexact group. 

\begin{theorem}\label{thm:3}
Let $\Gamma$ be a nonexact countable group. Then there exists a weakly convergent sequence of FIREs in $\Gamma$ whose limit is not an AIRE.
\end{theorem}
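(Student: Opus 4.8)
The plan is to use the combinatorial characterization of nonexactness: since $\Gamma$ is nonexact, its Cayley graph $\cal C$ (with respect to some finite generating set $S$) contains a sequence of small scale expanders $(G_n)_{n\in\NN}$ as induced subgraphs, where $G_n$ is a small scale $(\kappa, k_n)$-expander for a fixed $\kappa > 0$ and some $k_n \to \infty$. The idea is to build, for each $n$, a FIRE $\rho_n$ on $\Gamma$ whose classes are (isomorphic to) copies of the finite graph $G_n$ sitting inside $\cal C$, arranged $\Gamma$-invariantly, and then pass to a weak limit $\rho$. Concretely, I would fix for each $n$ a copy of $G_n$ in $\cal C$ and let $F_n \subset \Gamma$ be its vertex set; then take $\rho_n$ to be the IRE obtained by "tiling" $\Gamma$ by translates of $F_n$, or more robustly, by using an essentially free pmp action $\Gamma \acts (Z,\nu)$ and a Borel way of partitioning each orbit into pieces that each look like $F_n$ inside the Cayley graph. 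A clean way to get a genuine $\Gamma$-invariant such object: partition $\Gamma$ itself into left translates of $F_n$ is generally impossible, so instead one works in a pmp action and uses a maximal $r$-separated set / Borel transversal argument to place disjoint translates of $F_n$ covering almost everything, for $r$ chosen large relative to the diameter of $G_n$ — this gives a FIRE $\rho_n$ in which $\rho_n$-almost every class, together with the induced Cayley graph structure, contains a copy of $G_n$ as an induced subgraph (the unimodular random graph $\cal C_{\mathrm{con}}(o,\rho_n)$ Benjamini–Schramm converges to something that looks locally like $\cal C$ restricted to a translate of $F_n$).

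By sequential compactness of $\IRE_\Gamma$, pass to a subsequential weak limit $\rho$ of the $\rho_n$; by Proposition \ref{prop:conv}, the unimodular random graphs $\cal C_{\mathrm{con}}(o,\rho_n)$ Benjamini–Schramm converge to $\cal C_{\mathrm{con}}(o,\rho)$. The heart of the argument is then to show $\rho$ is not an AIRE. I would argue by contradiction through hyperfiniteness: if $\rho$ were amenable, then (using the equivalences in Section \ref{sec:prelims}, passing to a free IRE via an auxiliary free pmp action and Lemma \ref{lem:BorelAmenableRetract}) the restricted rerooting relation would be $\mu$-amenable, hence $\mu$-hyperfinite by Connes–Feldman–Weiss, hence by Proposition \ref{prop:hyper} the unimodular random graph $\cal C_{\mathrm{con}}(o,\rho)$ would be hyperfinite. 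By Schramm's theorem (Theorem \ref{prop:schramm}), hyperfiniteness of the limit forces the approximants $\cal C_{\mathrm{con}}(o,\rho_n)$ to be $(\eps, k)$-hyperfinite for $n$ large, for every $\eps > 0$ with $k = k(\eps)$ fixed. But the finite-graph version of $(\eps,k)$-hyperfiniteness (Schramm's Lemma 1.3 in the excerpt) then says that $G_n$ (being, up to negligible correction, the typical class) admits an edge set $E$ with $|E| \le \eps' |V(G_n)|$ whose removal leaves components of size $\le k$. For $n$ with $k_n > k$, each such component $F$ has $|F| \le k < k_n$, so the small scale $(\kappa, k_n)$-expander inequality gives $|\partial_{G_n} F| \ge \kappa |F|$; summing over the components, the removed edge set must have size at least $\tfrac{\kappa}{2}|V(G_n)|$ (each component contributes $\ge \kappa|F|$ boundary edges, each boundary edge counted at most twice), contradicting the bound $|E| \le \eps' |V(G_n)|$ once $\eps' < \kappa/2$. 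This contradiction shows $\rho$ is not an AIRE.

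The main obstacle, and the step requiring genuine care, is making the construction of the FIREs $\rho_n$ precise and $\Gamma$-invariant while controlling the limit: one must ensure that in $\rho_n$ a $1-o(1)$ fraction of vertices lie in classes that are honestly isomorphic (as induced subgraphs of $\cal C$, hence as abstract graphs) to $G_n$, so that when we invoke Schramm's finite-graph hyperfiniteness for the typical class we really are talking about $G_n$ and can invoke the expander inequality. The natural tool is a Borel "packing" of a free pmp action by translates of $F_n$: take a maximal $\delta_n$-separated set in the graph metric with $\delta_n$ larger than twice the diameter of $G_n$, center a copy of $F_n$ at each point, note these copies are pairwise disjoint and their union has density bounded below by a constant (at worst a small fraction is uncovered, which can be absorbed into singleton classes), and declare two points equivalent iff they lie in the same copy. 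One should double-check that this yields a bona fide Borel equivalence relation with finite classes and that the resulting IRE's typical component, in the Benjamini–Schramm sense, is dominated by the copy-of-$G_n$ contribution. I would also note the subtlety flagged in Remark \ref{rmk:groupoid}: to pass from amenability of $\rho$ to hyperfiniteness of the associated unimodular graph one wants to work with a free model (couple $\rho$ with a free pmp action), which is harmless since hyperfiniteness of $\cal C_{\mathrm{con}}(o,\rho)$ only depends on $\rho$.
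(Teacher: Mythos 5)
Your overall architecture is the same as the paper's: extract small scale expanders from nonexactness, build FIREs whose classes resemble them, pass to a weak limit, and derive a contradiction via amenable $\Rightarrow$ hyperfinite $\Rightarrow$ Schramm $\Rightarrow$ finite-graph hyperfiniteness $\Rightarrow$ violation of expansion. The genuine gap is in the one step you yourself flag as delicate: the construction of the FIREs. If $S$ is a maximal $\delta_n$-separated set with $\delta_n$ comparable to $2\,\mathrm{diam}(G_n)$, then the balls of radius $\lfloor \delta_n/2\rfloor$ about the points of $S$ are pairwise disjoint, so the union of the copies of $V(G_n)$ has density at most $|V(G_n)|/|B_{\lfloor \delta_n/2\rfloor}(\cal C,e)|$, and this can tend to $0$ as $n\to\infty$ (a ball of radius comparable to $\mathrm{diam}(G_n)$ may be exponentially larger than $G_n$). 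So the claim that ``at worst a small fraction is uncovered'' is unjustified and false in general: typically almost every vertex is a singleton, the probability that the root lies in a copy of $G_n$ tends to $0$, and the weak limit is the equality IRE $\delta_\Delta$, which is amenable. What the argument actually requires --- and what Proposition \ref{thm:1} delivers --- is a \emph{uniform} lower bound $\inf_n \PP\big[|o|_{R_n}\geq(1-\eps)|V(G_n)|\big]>0$. The paper achieves this by placing translates of $V(G_n)$ at the points of a Bernoulli process of intensity $\delta/|V(G_n)|$ (so the covered density is about $\delta$ independently of $n$) and resolving overlaps with i.i.d.\ marks; the estimates of Lemma \ref{lem:tesel} (mass transport plus Paley--Zygmund) then give the uniform bound. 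The price is that the root's cell is only a $(1-\eps)$-fraction of a translate of $V(G_n)$, never an exact copy, which is why the paper proves the robust statement (Proposition \ref{thm:2}) that deleting an $\eps$-fraction of the vertices of a small scale expander still leaves a non-$(\eps,N)$-hyperfinite graph; your final contradiction, which applies the expansion inequality to $G_n$ itself, must be run in this perturbed form. Relatedly, since only a uniformly positive fraction (not $1-o(1)$) of roots lie in near-copies, extracting a single bad finite graph from $(\delta,k)$-hyperfiniteness of $\cal C_{\textrm{con}}(o,R_N)$ requires conditioning on the event $|o|_{R_N}\geq(1-\eps)|V(G_N)|$ and a mass-transport/first-moment computation, not just Schramm's finite-graph lemma applied to ``the typical class.''

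A smaller omission: the theorem concerns arbitrary countable nonexact groups, while the combinatorial characterization of nonexactness you invoke requires finite generation. The paper reduces to the finitely generated case by passing to a finitely generated nonexact subgroup (increasing unions of exact groups are exact) and coinducing the resulting FIREs, using that coinduction preserves finiteness, amenability, and weak convergence (Proposition \ref{prop:coinduction}). Your derivation of hyperfiniteness of $\cal C_{\textrm{con}}(o,\rho)$ from amenability of $\rho$, and the use of Proposition \ref{prop:conv} and Theorem \ref{prop:schramm}, are essentially as in the paper.
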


Let us briefly sketch the main idea of the proof and outline the section. A (finitely generated) nonexact group admits a sequence of small scale expanders. We construct finite IREs such that the cell of the identity is (up to an $\eps$ error) one of these small scale expanders with probability at least $\eps$ (see Section \ref{sec:tessel}). In Section \ref{sec:tessel2}, we show that small scale expanders are uniformly far from hyperfinite. This then implies, roughly speaking, that the Benjamini--Schramm limit of the  connected component of the identity cannot be a hyperfinite unimodular random graph, which in turn implies the limit IRE is not amenable.

\subsection{IREs from a sequence of finite subsets}\label{sec:tessel}

We will now demonstrate that given a finite subset $A\subseteq \Gamma$ we may find a FIRE such that the cell at the identity is a translate of $A$ up to a ``small'' error with uniformly positive probability.

Denote by $|o|_R$ the size of the class $[e]_R$.

\begin{proposition}\label{thm:1}
    Let $\Gamma$ be a countable group and $(A_n)_{n\in\NN}$ a sequence of finite subsets of $\Gamma$. Then, for every $\eps > 0$ there exists a sequence of FIREs $(R_n)_{n\in\NN}$ such that
    \[
   \inf_{n\in \NN} \PP\big[|o|_{ R_n}\geq (1-\eps)| A_n|\big] > 0
    \]
    and such that $[o]_{R_n}\subseteq gA_n$ for some $g\in \Gamma$ almost surely.
\end{proposition}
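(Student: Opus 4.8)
The plan is to build each $R_n$ from a single finite subset $A = A_n$ by a tiling-type construction, using randomness to guarantee that the class of the identity is a large subset of a translate of $A$ with uniformly positive probability, independently of $n$.

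\medskip\noindent\textbf{Setup.} Fix $A \subseteq \Gamma$ finite and $\eps > 0$. The natural idea is to randomly partition (a large portion of) $\Gamma$ into translates $\{g_i A\}$ of $A$, and to declare two elements equivalent iff they lie in the same tile. The class of the identity is then $g_i A \cap (\text{stuff assigned to the same tile})$ for the tile containing $e$; the point is to arrange that $e$ lands ``well inside'' its tile a positive fraction of the time. Concretely, I would first handle the model case $\Gamma = \ZZ^d$ or a free group to see the mechanism, then give the general construction. The cleanest approach avoids worrying about perfect tilings (which need not exist): we only need each class to be \emph{contained} in a translate of $A$, so we may allow ``leftover'' singletons.

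\medskip\noindent\textbf{Main construction.} Here is the construction I would carry out. Consider i.i.d. uniform labels $\omega \in [0,1]^\Gamma$. Call $g \in \Gamma$ a \emph{center} if $\omega(g)$ is a strict local maximum among $\{\omega(h) : h \in g A A^{-1}\}$ (i.e. among all $h$ whose associated tile $hA$ could overlap $gA$); this is a $\Gamma$-equivariant, measurable function of $\omega$. Now assign each $x \in \Gamma$ to the center $g$ with $x \in gA$ having the largest label $\omega(g)$, if such a center exists (and make $x$ its own singleton class otherwise). Declaring $x \sim y$ iff they are assigned to the same center produces an equivalence relation $R$ whose distribution is $\Gamma$-invariant, all of whose classes are finite (each is contained in $gA$ for the center $g$), and the map $\omega \mapsto R(\omega)$ is $\Gamma$-equivariant. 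So $R$ is a FIRE with $[o]_R \subseteq gA$ for some $g$ almost surely. The key estimate: I would show that with probability bounded below by a constant $c = c(\eps, A) > 0$ the identity $e$ is a center and, moreover, at least a $(1-\eps)$-fraction of $eA = A$ gets assigned to $e$. This happens as soon as $\omega(e)$ exceeds all labels in the (finite) set $A A^{-1} \setminus \{e\}$ — a positive-probability event depending only on $|AA^{-1}|$ — combined with an event forcing ``most of $A$ has no competing center with a larger label''; since the number of elements of $A$ that could be stolen by a competitor is controlled by how many $h \in AA^{-1}$ carry a large label, one can by an averaging/union-bound argument make the expected loss small and hence get the $(1-\eps)|A|$ bound with positive probability.

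\medskip\noindent\textbf{The main obstacle.} The delicate point is getting a lower bound on $\PP[|o|_{R_n} \ge (1-\eps)|A_n|]$ that is \emph{uniform in $n$}, since $|A_n|$ and the ``shape'' of $A_n$ vary wildly. A naive bound like ``$\omega(e)$ is the maximum over $A_nA_n^{-1}$'' has probability $1/|A_nA_n^{-1}|$, which degenerates. The fix is that we do not need $e$ to beat \emph{all} of $A_nA_n^{-1}$; we only need it to beat its \emph{immediate competitors}, and we can afford to lose an $\eps$-fraction of the tile. So the right move is a two-parameter argument: condition on $\omega(e) = t$ for $t$ close to $1$ (probability $\sim 1-t$ per unit, so an event of probability $\eps'$ to have $\omega(e) > 1 - \eps'$), and then on this event, for each $x \in A_n$, the chance that some $h \in xA_n^{-1}$ (the centers that could claim $x$) has label exceeding $t$ is at most $|A_n^{-1}|(1-t) = |A_n|(1-t)$; choosing $1-t = \eps / |A_n|$ makes this at most $\eps$ \emph{per element}, so in expectation at most $\eps|A_n|$ elements are lost, and Markov plus a little care gives $|o|_{R_n} \ge (1-2\eps)|A_n|$ on an event of probability $\gtrsim \eps/|A_n|$ — but that still degenerates. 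To genuinely get a \emph{uniform} lower bound one instead uses invariance: by the mass transport principle the expected class size of a uniformly rooted tile equals (roughly) $|A_n|$ times the probability that a fixed point is a center, so a \emph{constant} fraction of $\Gamma$ lies in tiles that are $(1-\eps)$-full, and invariance transfers this to a constant lower bound on $\PP[|o|_{R_n} \ge (1-\eps)|A_n|]$. Making this mass-transport bookkeeping precise — ensuring the ``fraction of good tiles'' is bounded below independently of the geometry of $A_n$ — is the crux of the proof, and is exactly where I would spend the most effort.
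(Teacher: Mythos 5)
Your overall strategy --- randomly placed translates of $A_n$ serving as tiles, leftover singletons, conflict resolution by i.i.d.\ labels, and a mass transport computation at the end --- is the same in spirit as the paper's. But the specific center process you choose creates a genuine gap exactly at the point you yourself identify as the crux. With your definition, $g$ is a center iff $\omega(g)$ is the maximum of $\omega$ over $gA_nA_n^{-1}$, so $\PP[e \text{ is a center}] = 1/|A_nA_n^{-1}|$. Moreover, since $A_nA_n^{-1}$ is symmetric, two distinct centers can never have overlapping tiles (each would have to beat the other), so every tile is completely full and the events $\{g \text{ is a center},\ e \in gA_n\}$ for $g \in A_n^{-1}$ are disjoint; summing them (equivalently, applying mass transport) gives exactly $\PP\big[|o|_{R_n} = |A_n|\big] = |A_n|/|A_nA_n^{-1}|$. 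This is not bounded below uniformly in $n$: the ratio $|A_nA_n^{-1}|/|A_n|$ is unbounded for generic sequences of finite sets in a nonamenable group, and in the intended application $A_n = V(G_n)$ for small scale expanders, where no control on $|A_nA_n^{-1}|$ is available. So your concluding assertion that ``a constant fraction of $\Gamma$ lies in tiles that are $(1-\eps)$-full'' is false for this construction; the mass-transport bookkeeping you defer is not bookkeeping but the place where the argument breaks. Your two-parameter variant has the same defect, as you note yourself (``that still degenerates'').

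The missing idea is to decouple the density of centers from the combinatorics of $A_nA_n^{-1}$. The paper takes the centers to be a Bernoulli subset $\Pi \subseteq \Gamma$ of intensity $\delta/|A_n|$ for a small constant $\delta$ depending only on $\eps$, each center carrying an independent uniform mark used to resolve conflicts much as in your construction. Then two quantities are simultaneously of order $\delta$, uniformly in $n$: the probability that $o$ lies in some tile is roughly $|A_n|\cdot\delta/|A_n| = \delta$ (in fact at least $\delta - \delta^2$); and, for a fixed element $g$ of a tile $xA_n$, the centers that could steal $g$ lie in $gA_n^{-1}$, a set of size only $|A_n|$ (not $|A_nA_n^{-1}|$), so the expected number of competitors is at most $|A_n|\cdot\delta/|A_n|=\delta$ \emph{per element}. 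Hence the conditional expectation of $|o|_{R_n}/|A_n|$ given coverage is at least $1-2\delta$. A final application of the Paley--Zygmund inequality (using the trivial bound $|o|_{R_n}\le|A_n|$) converts this conditional first-moment estimate into $\PP\big[|o|_{R_n}\geq(1-2\delta)^2|A_n| \,\big|\, o \text{ covered}\big]\geq 4\delta^2(1-2\delta)^2$, and multiplying by the coverage probability yields a lower bound depending only on $\delta$, hence only on $\eps$. Without replacing your local-maximum process by such a tuned Bernoulli process (or otherwise arranging both a uniform coverage probability and a uniform per-element conflict bound), the proof does not go through.
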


The proof relies on the following construction of a random partial tiling $R$ of $\Gamma$ associated to a finite subset $A\subseteq \Gamma$ which was suggested to us by Miko\l aj Frączyk after Tom Hutchcroft used it to show that Gromov monsters admit a sequence of finite unimodular random subgraphs whose limit is nonamenable. 

Fix a finite subset $e\in A\subseteq \Gamma$ and a parameter $\delta>0$. 
Let $\Pi$ be a $\Gamma$-invariant Bernoulli random subset of $\Gamma$  with an independent uniform $[0,1]$ mark on each element of $\Pi$ such that $\PP [e\in \Pi] = \frac{\delta}{|A|}$.

The FIRE $R$ is defined by the following sampling process. First, sample from $\Pi$. For each $x\in \Pi$, place an $A$-translate $x A$. We say that a vertex $v\in \Gamma$ is \textit{conflictive} if there exist distinct $x,y\in \Pi$ that $v\in x A \cap yA$. 
The random equivalence relation $R$ is then generated by:
\begin{itemize}
\item All vertices in the complement of $\Pi A$ belong to singleton classes.

\item If $v \in \Pi A$ is not conflictive, then there exists a unique $x_v\in \Pi$ such that $v \in x_vA$, and we let $(v,x_v)\in R$.

\item If $v$ is conflictive and not in $\Pi$, let $x_v \in \Pi$ be the vertex with the least $[0,1]$ mark such that $v\in x_v A$, and set $(v,x_v)\in R$.
\end{itemize}

\noindent
Equivalently, every $g\in \Pi A-\Pi$ chooses $h_g\in \Pi$ with the smallest $[0,1]$ mark such that $g\in h_g A$. The remaining $g\in \Gamma$ choose themselves $h_g\coloneqq g$. The random relation $R$ is then given by the color classes of $g\mapsto h_g$.

The random equivalence relation $R$ clearly has every class contained inside $xA$ for some $x\in \Gamma$ and is invariant as it is constructed as a factor of the full Bernoulli shift $\Gamma\acts([0,1]^\Gamma, \texttt{Leb}^{\otimes \Gamma})$.

In Lemma \ref{lem:tesel}, we collect some estimates for $R$. Let $o\in [\Pi]_R$ denote the event where the identity $e$ is $R$-related to a vertex in $\Pi$ and let $o\not\in [\Pi]_R$ denote the negation of that event. 

\begin{lemma}\label{lem:tesel}
    The following inequalities hold for $R$.

    \begin{enumerate}[(i)]

    \item  $
    \PP \big[o\in [\Pi]_R\big] \geq  \delta - \delta^2 
    $,

    \item   $
   \expect[\Big]{{|o|_{R}} | o\in [\Pi]_R} \geq |A| (1 - 2\delta)
   $, and

   \item  $
    \PP\big[\frac{|o|_{R}}{|A|} \geq (1 - 2\delta)^2 \;\big| \;o\in [\Pi]_R\big] \geq 4\delta^2 (1 - 2\delta)^2
    $.
    \end{enumerate}
\end{lemma}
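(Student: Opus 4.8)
The plan is to derive all three estimates from the mass transport principle for $\Gamma$ applied to the $\Gamma$-invariant marked process $\Pi$ (the functions on $\Gamma\times\Gamma$ involved are diagonally invariant because the tiling $R$ is an equivariant factor of $\Pi$), together with two elementary moment inequalities. Write $C_x\coloneqq[x]_R$ for $x\in\Pi$. From the construction, every class of $R$ is either a singleton contained in $\Gamma\setminus\Pi A$ or equals some $C_x$ with $\Pi\cap C_x=\{x\}$ and $C_x\subseteq xA$; moreover $\Pi\subseteq\Pi A$ since $e\in A$. One checks that $\{o\in[\Pi]_R\}=\{e\in\Pi A\}=\{A^{-1}\cap\Pi\neq\empt\}$: if $x\in\Pi\cap[e]_R$ then $e\in C_x\subseteq\Pi A$, and conversely if $e\in\Pi A$ then either $e\in\Pi$ or the chosen root $h_e$ of $e$ lies in $\Pi\cap[e]_R$. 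We work in the regime $\delta\in(0,\tfrac12)$.

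Item (i) is then immediate from Bonferroni's inequality: the events $\{a^{-1}\in\Pi\}$ for $a\in A$ are over $|A|$ distinct sites, each of probability $\delta/|A|$, so
\[
    \PP[o\in[\Pi]_R]\ \geq\ |A|\cdot\tfrac{\delta}{|A|}\ -\ \binom{|A|}{2}\Big(\tfrac{\delta}{|A|}\Big)^{2}\ =\ \delta-\tfrac{|A|-1}{2|A|}\delta^{2}\ \geq\ \delta-\delta^{2}.
\]
For (ii) I would run two mass transports. In the first, each $x\in\Pi$ sends unit mass to every point of $C_x$; the mass into $e$ is $\#(\Pi\cap[e]_R)=\one[e\in\Pi A]$ and the mass out of $e$ is $\one[e\in\Pi]\,|C_e|$, whence $\EE[\one[e\in\Pi]\,|C_e|]=\PP[o\in[\Pi]_R]$. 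In the second, each $x\in\Pi$ sends mass $|C_x|$ to every point of $C_x$; the mass into $e$ is $|o|_R\,\one[o\in[\Pi]_R]$ and the mass out of $e$ is $\one[e\in\Pi]\,|C_e|^{2}$, whence $\EE[|o|_R\,\one[o\in[\Pi]_R]]=\EE[\one[e\in\Pi]\,|C_e|^{2}]$. Cauchy--Schwarz gives $\EE[\one[e\in\Pi]\,|C_e|^{2}]\geq\EE[\one[e\in\Pi]\,|C_e|]^{2}/\PP[e\in\Pi]=\tfrac{|A|}{\delta}\,\PP[o\in[\Pi]_R]^{2}$, so combining with (i),
\[
    \EE\big[|o|_R\,\one[o\in[\Pi]_R]\big]\ \geq\ \tfrac{|A|}{\delta}\,(\delta-\delta^{2})\,\PP[o\in[\Pi]_R]\ =\ |A|(1-\delta)\,\PP[o\in[\Pi]_R],
\]
and dividing by $\PP[o\in[\Pi]_R]>0$ proves (ii) (in fact with $1-\delta$ in place of $1-2\delta$).

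Finally, (iii) follows from (ii) by a reverse-Markov (Paley--Zygmund-type) estimate. Conditioned on $\{o\in[\Pi]_R\}$ the variable $Y\coloneqq|o|_R/|A|$ lies in $(0,1]$ (as $[e]_R\subseteq xA$) with conditional mean $\geq1-2\delta$ by (ii); since $Y\leq t+(1-t)\,\one[Y\geq t]$ pointwise for any $t\in(0,1)$, taking conditional expectations gives $\PP[Y\geq t\mid o\in[\Pi]_R]\geq\frac{(1-2\delta)-t}{1-t}$. With $t=(1-2\delta)^{2}$ this equals $\frac{1-2\delta}{2-2\delta}\geq\frac{1-2\delta}{2}\geq4\delta^{2}(1-2\delta)^{2}$ for $\delta\in(0,\tfrac12)$ (it suffices that $4\delta^{2}(1-2\delta)\leq\tfrac12$, and the left side never exceeds $\tfrac{4}{27}$), which is (iii), with room to spare. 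The only genuine subtlety is (ii): conditioning on $\{e\in\Pi\}$ and estimating $|C_e|$ directly would lose a factor of $|A|$, since $\PP[e\in\Pi\mid o\in[\Pi]_R]\approx1/|A|$, and the point of the second, size-weighted mass transport (followed by Cauchy--Schwarz) is precisely to upgrade the first-moment identity to the desired lower bound without this loss; everything else is routine bookkeeping about the tiling.
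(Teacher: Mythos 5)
Your proof is correct, and each of the three parts takes a genuinely different (and in places sharper) route than the paper's. For (i), you identify $\{o\in[\Pi]_R\}$ with $\{\Pi\cap A^{-1}\neq\empt\}$ and apply Bonferroni directly, whereas the paper first writes $\PP[o\in[\Pi]_R]=\EE[|o|_R\one_{e\in\Pi}]$ by mass transport and then bounds the probability that each $g\in A$ is conflictive; both land on $\delta-\delta^2$. For (ii), the paper reuses the conflict estimate to subtract the missing contributions term by term, while you avoid any further control on conflicts: your second, size-weighted mass transport combined with Cauchy--Schwarz is exactly the statement that the class of the root (conditioned on lying in a $\Pi$-cell) is the size-biased version of a typical $\Pi$-cell and hence has larger mean, and together with (i) this yields the stronger constant $|A|(1-\delta)$ in place of $|A|(1-2\delta)$. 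For (iii), the paper applies Paley--Zygmund with $\EE\big[|o|_R^2\mid o\in[\Pi]_R\big]\leq|A|^2$, while your reverse-Markov bound for $[0,1]$-valued variables gives $\tfrac{1-2\delta}{2(1-\delta)}$, which for small $\delta$ is far stronger than the stated $4\delta^2(1-2\delta)^2$; either bound suffices for Proposition \ref{thm:1}, which only needs a uniform positive lower bound. The one hypothesis worth making explicit, as you do, is $\delta\in(0,\tfrac12)$; the paper's argument needs this too (so that $\theta=1-2\delta\in[0,1]$ in Paley--Zygmund), and it is harmless since $\delta$ is ultimately chosen small.
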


\begin{proof}


    (i) The Mass Transport Principle implies that \begin{equation}\label{eq:mtp1}
        \PP\big[o\in [\Pi]_R\big]=\EE\Big[|o|_{R}\one_{e\in \Pi}\Big].
    \end{equation}
    Writing \begin{equation}\label{eq:class.size}
        |o|_{R} = |A| - \sum_{g\in A}  \one_{ (g, e)\not\in R},
    \end{equation}
    we can combine Equations (\ref{eq:mtp1}) and (\ref{eq:class.size}) to get, by linearity of expectation, that \begin{align}
    \PP\big[o\in [\Pi]_R\big]&= |A|\PP\big[e\in\Pi\big] - \EE\bigg[\sum_{g\in A}  \one_{ (g, e)\not\in R} \one_{e\in \Pi}\bigg]\notag\\
    &= \delta - \sum_{g\in A}  \PP\big[(g,e)\not\in R \ \text{and}\  e\in \Pi\big]. \label{eq:5.3.first}
    \end{align}
    
    To compute a lower bound for $\PP\big[o\in [\Pi]_R\big]$, we are then left with finding upper bounds for each $\PP\big[(g,e)\not\in R \ \text{and}\  e\in \Pi\big]$ with $g \in A$. For each $g\in A$,
    \begin{equation}
    \PP \big[(g, e)\not\in R \ \text{and}\  e\in \Pi\big]\leq \PP \big[g\ \text{is conflictive and}\ e\in\Pi\big].\label{eq:1}
    \end{equation}
    For $g\in A$ to be conflictive, the set $g A^{-1}-\{e\}$ must contain some point of $\Pi$. Therefore, because $\Pi$ is Bernoulli random, for $g\in A$ we deduce
    \begin{align}
        \PP\big[g \ \text{is conflictive and}\ & e\in \Pi\big] = \frac{\delta}{|A|}\bigg[ 1 - \big(1 - \frac{\delta}{|A|} \big)^{|A| -1}\bigg] \label{eq:2}\\
        &\leq \frac{\delta}{|A|}(|A|-1) \frac{\delta}{|A|}  \tag{Bernoulli inequality} \leq \frac{\delta^2}{|A|}. \notag
    \end{align}
    The combination of the above leads to the conclusion
    $$
     \PP\big[o\in [\Pi]_R\big] \geq \delta - \delta^2.
    $$

    (ii) Whenever $o\in[ \Pi]_R$, the identity is $R$-related to a unique element of $\Pi$ which we denote by $x_o$. Using linearity of expectation along the lines of Equation (\ref{eq:class.size}), we deduce that $$
    \EE\Big[|o|_{R} \one_{o \in [\Pi]_R}\Big]  =  |A|\PP\big[o\in [\Pi]_R\big] - \sum_{g\in A} \EE\Big[\one_{(x_o g,x_o)\not\in R}\one_{o\in[\Pi]_R}\Big],
    $$
    and so by (i) we get \begin{equation}\label{eq:2.1}
        \EE\Big[|o|_{R} \one_{o \in [\Pi]_R}\Big]  \geq |A| (\delta - \delta^2) -\sum_{g\in A} \EE\Big[\one_{(x_o g,x_o)\not\in R}\one_{o\in[\Pi]_R}\Big].
    \end{equation}

    The Mass Transport Principle implies that \begin{equation}\label{eq:2.2}
    \EE\Big[\one_{(x_o g,x_o)\not\in R}\one_{o\in[\Pi]_R}\Big] = \EE\Big[|o|_R\one_{(g,e)\not\in R}\one_{ e\in \Pi}\Big] \leq |A| \PP\big[(g,e)\not\in R \ \text{and}\ e\in \Pi\big].
    \end{equation}
   
    Since $|o|_R \leq |A|$ almost surely.  Equations (\ref{eq:1}) and (\ref{eq:2}) imply that the right hand side of Equation (\ref{eq:2.2}) is bounded above by $\delta^2$. Combining this bound with Equation (\ref{eq:2.1}) we deduce that $$
    \EE\Big[|o|_R\one_{o\in [\Pi]_R}\Big]\geq|A|\left(\delta - \delta^2-\delta^2 \right).
    $$
    
    To conclude the proof, observe that Equation (\ref{eq:5.3.first}) implies that $\PP\big[o\in[\Pi]_R\big] \leq \delta$, so
    \[
         \EE\Big[|o|_{R} \;\big|\; o\in [\Pi]_R\Big] \geq \frac{1}{\delta}\EE\Big[|o|_{R} \one_{o\in[\Pi]_ R}\Big]\geq |A|(1-2\delta).
    \]

    (iii) This is a straightforward application of the Paley--Zygmund inequality to the random variable $\frac{|o|_R}{|A|}$ conditioned on $o\in[\Pi]_R$, using part (ii) and ${\expect[\big]{{|o|_{R}^2} | o\in [\Pi]_R}\leq |A|^2}$.
\end{proof}

\begin{proof}[Proof of Proposition \ref{thm:1}]
    Let $\delta>0$ be small enough so that $$
    (1 - 2\delta)^2\geq 1 - \eps.
    $$
    For each $n\in \NN$, let $R_n$ be the FIRE associated to $A_n$ and $\delta$, as constructed in the preamble of the previous lemma. Conclusions (i) and (iii) of Lemma \ref{lem:tesel} imply that $$
    \PP\big[|o|_{R_n}\geq (1-\eps)|A_n|\big] \geq (\delta - \delta^2)4\delta^2(1 - 2\delta)^2 >0,
    $$
    proving the proposition.  
\end{proof}

\subsection{Small scale expanders are uniformly far from hyperfinite}\label{sec:tessel2}

Small scale expander graphs are non-hyperfinite in a robust way: if one deletes a sufficiently small proportion of the vertices from a small scale expander, the resulting graph cannot be hyperfinite. More precisely:

\begin{proposition}\label{thm:2}
    Let  $G$ be a small scale $(\kappa,N)$-expander with degree at most $d\in \NN$. Let $ 0 < \eps < \frac{\kappa}{2 (1 + d) + \kappa}$. Then any induced subgraph $G[A] \subseteq G$ with $|A|\geq (1-\eps)|V(G)|$ is not $(\eps,N)$-hyperfinite.
\end{proposition}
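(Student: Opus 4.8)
The plan is to argue by contradiction: suppose $G[A]$ were $(\eps, N)$-hyperfinite, so there is an edge set $E \subseteq E(G[A])$ with $|E| \leq \eps |V(G)|$ such that every connected component of $G[A] \setminus E$ has size at most $N$. Let $\cal C$ denote the collection of these components. The idea is that the small-scale expansion of $G$ forces each component $C \in \cal C$ (which has size at most $N$, so expansion applies) to have a large boundary $|\partial_G C| \geq \kappa |C|$, and summing over all components gives a lower bound on the total number of ``cut'' edges and missing vertices, which will be incompatible with the budget $|E| \leq \eps|V(G)|$ once $\eps$ is below the stated threshold.

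First I would set up the accounting. Write $V = V(G)$, $n = |V|$, and $B = V \setminus A$, so $|B| \leq \eps n$. For each component $C \in \cal C$, every edge of $G$ leaving $C$ within $A$ either lies in $E$ or goes to another component; since components are separated by $G[A] \setminus E$ being disconnected between them, every such edge lies in $E$. Hence $\partial_{G[A]} C \subseteq E$ for each $C$, and since each edge of $E$ is counted for at most $2$ components, $\sum_{C \in \cal C} |\partial_{G[A]} C| \leq 2|E| \leq 2\eps n$. On the other hand, the edges of $\partial_G C$ that are \emph{not} in $\partial_{G[A]} C$ must have their other endpoint in $B$, and there are at most $d|B| \leq d\eps n$ such edges in total (each vertex of $B$ has degree at most $d$). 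Combining, $\sum_{C \in \cal C} |\partial_G C| \leq 2\eps n + d \eps n = (2+d)\eps n$.

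Next I apply small-scale expansion: each $C$ has $|C| \leq N$, so $|\partial_G C| \geq \kappa |C|$, and therefore $\sum_{C \in \cal C} |\partial_G C| \geq \kappa \sum_{C} |C| = \kappa |A| \geq \kappa(1-\eps) n$. Putting the two bounds together yields
\[
\kappa(1-\eps) n \leq (2 + d)\eps n,
\]
so $\kappa(1-\eps) \leq (2+d)\eps$, i.e. $\kappa \leq (2 + d + \kappa)\eps$, which contradicts $\eps < \frac{\kappa}{2(1+d) + \kappa}$ (note $2(1+d) + \kappa = 2 + 2d + \kappa \geq 2 + d + \kappa$, so the chosen $\eps$ indeed violates the derived inequality). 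Hence $G[A]$ is not $(\eps, N)$-hyperfinite.

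The step I expect to require the most care is the boundary bookkeeping — precisely tracking which edges in $\partial_G C$ are charged to $E$ versus to vertices of $B$, and making sure the factor of $2$ (an edge of $E$ shared between two components) and the degree bound $d$ on vertices of $B$ are applied without double-counting or undercounting. Everything else is a direct application of the small-scale expander hypothesis and arithmetic. One small subtlety worth stating explicitly: the definition of $(\eps, N)$-hyperfiniteness for a finite graph refers to the proportion of \emph{edges} removed relative to the number of \emph{vertices}, and the size bound $|A| \geq (1-\eps)|V(G)|$ is what lets us convert between $n$ and $|A|$; I would make sure the inequalities are stated in terms of $n = |V(G)|$ throughout to match the hypothesis cleanly.
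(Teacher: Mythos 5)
Your proof is correct and follows essentially the same argument as the paper's: bound $\sum_{C} |\partial_G C|$ over the components from above by charging cut edges to $E$ (with multiplicity two) and boundary edges meeting $V(G)\setminus A$ to the degree bound, then from below by small-scale expansion, and compare. Your bookkeeping is in fact marginally sharper than the paper's --- you obtain $\kappa(1-\eps)\leq (2+d)\eps$ rather than $\kappa(1-\eps)\leq 2(1+d)\eps$, since you count edges into $V(G)\setminus A$ only once --- and you correctly check that this still contradicts the stated threshold on $\eps$.
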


\begin{proof}
    We will prove the contrapositive. Suppose $G$ is a small scale $(\kappa,N)$-expander with degree at most $d$ and that there exists $A \subseteq V(G)$ with $|A| \geq (1- \eps) |V(G)|$ such that $G[A]$ is $(\eps,N)$-hyperfinite.  Hence there exists a set $E \subseteq E(G[A])$ such that $|E|\leq \eps |A|$ and $G[A]- E$ has connected components of size at most $N$.

    Let $\cal F$ be the family of connected components of $G [A]- E$, and $E(\cal F)$ denote the union of the edge sets of the graphs of $\cal F$. 
    Thus,  
    \[
         |E(G) - E(\cal F)| \leq |E| + d|V(G)-A| \leq \eps |A| + \eps d|V(G)| \leq \eps (1 + d) |V(G)|.
    \]
On the other hand, we get the following bound from small scale expansion, 
\[
       |E(G) - E(\cal F)|  \geq \frac{1}{2} \sum_{F\in \cal F} |\partial_{G} F|\geq \frac{\kappa}{2}\sum_{F\in\cal F} |F| = \frac{\kappa}{2} |A|\geq \frac{\kappa}{2}(1-\eps)|V(G)|
\]
    where the first inequality is true because the sum $\sum_{F\in \cal F} |\partial_{G} F|$ counts edges between $\cal F$ components at most twice, and edges from a $\cal F$ component to $V(G) -A$ at most once.
    
    We have shown that $\kappa (1 - \eps) \leq 2\eps (1 + d)$ and thus $ \frac{\kappa}{2 (1 + d) + \kappa} \leq \eps$, concluding the proof.
\end{proof}

\subsection{Nonamenable weak limits of FIREs}\label{sec:tessel3}

If $\cal C$ is a Cayley graph of $\Gamma$ and $R$ is an IRE, we let 
$\cal C(o,R)$ denote the (possibly disconnected) random subgraph of $\cal C$ induced by $[e]_R$ and rooted at $o=e$. We then distinguish this from $\cal C_{\textrm{con}} (o, R)$  which will denote the connected component of $o$ in $\cal C(o,R)$.

We can now prove that nonexact countable groups admit a convergent sequence of finite IREs whose limit is a nonamenable IRE.

\begin{proof}[Proof of Theorem \ref{thm:3}] 
We first reduce to the case where $\Gamma$ is finitely generated.

Since increasing unions of exact groups are exact \cite[Exercise 5.1.1]{Bro}, if $\Gamma$ is nonexact, there is a finitely generated subgroup  $\Lambda \leq \Gamma$ that is also nonexact. Suppose that $(R_n)_n$ is a sequence of finite IREs on $\Lambda$ weakly converging to a nonamenable IRE $R$ on $\Lambda$. Proposition \ref{prop:coinduction} then implies that the sequence $\mathrm{CInd}_\Lambda^\Gamma(R_n)$ of finite IREs converges weakly to the nonamenable IRE $\mathrm{CInd}_\Lambda^\Gamma(R)$. 

Now suppose $\Gamma$ is finitely generated nonexact and let $\cal C$ be a Cayley graph for $\Gamma$ whose degree we denote by $d$. Then there exists a sequence $(G_n)_n$ of induced subgraphs of $\cal C$ with $e\in V(G_n)$ that forms a sequence of small scale $\kappa$-expanders, for some fixed $\kappa > 0$. Fix $\eps>0$ where $\eps$ is small enough to satisfy the hypotheses of Proposition \ref{thm:2} and let $(R_n)_{n}$ be the sequence of FIREs obtained by applying Proposition \ref{thm:1} with inputs $\eps$ and the sequence of subsets $(V(G_n))_{n}$. By sequential compactness we may assume, upon possibly passing to a subsequence, that the sequence $(R_n)_{n}$ of FIREs weakly converges to an IRE $R$. It remains to show that $R$ is nonamenable.
 
Let us therefore suppose for the sake of contradiction that $R$ is an amenable IRE. We will find $N\in \NN$ and a subset $A\subseteq V( G_N)$ with $|A|\geq (1-\eps)|V(G)|$ such that the induced subgraph $G_N[A]$ is $(\eps,N)$-hyperfinite, contradicting Theorem \ref{thm:2}.

A coupling argument using \cite{CFW} as in Proposition \ref{prop:AIREsarewFIREs} shows that $\cal C_{\textrm{con}} (o,R)$ may be obtained by sampling connected components of a $\mu$-hyperfinite Borel graph. Hence $\cal C_{\textrm{con}} (o,R)$ is a hyperfinite unimodular random graph by Proposition \ref{prop:hyper}. Let $\delta >0$ be small enough such that $$
 \frac{\delta}{2c(1 - \eps)} \leq \eps,
 $$
 where $c\coloneqq \inf_{n\in \NN}\PP\big[|o|_{R_n}\geq (1-\eps)|V(G_n)|\big] >0$. 
 It follows from Proposition \ref{prop:conv} and Theorem \ref{prop:schramm} that there exists $k\in \NN$ such that for $n$ sufficiently large $\cal C_{\textrm{con}} (o,R_n)$ is $(\delta,k)$-hyperfinite. Hence there exists $N$ such that $\cal C_{\textrm{con}}(o,R_N)$ is $(\delta,N)$-hyperfinite. 
 Thus there exists a rerooting equivariant subset of edges $K (\cal C_{\textrm{con}} (o,R_N))\subseteq E (\cal C_{\textrm{con}} (o,R_N))$ such that $\EE \big[\deg_{K (\cal C_{\textrm{con}} (o,R_N))} (o) \big] \leq \delta$
 and $\cal C_{\textrm{con}}(o,R_N) - K (\cal C_{\textrm{con}} (o,R_N))$ has connected components of size at most $k$ almost surely. 

Define a coupling $(\cal C (o,R_N),E_N,o)$ by letting 
\[
E_N \coloneqq \bigcup_{g\in [e]_{R_N}} g K (\cal C (o,g^{-1}.R_N)) \subseteq E(\cal C (o,R_N).
\]
By construction $E_N$ coincides with $K(\cal C_{\textrm{con}}(o,R_N))$ on each connected component of $\cal C(o,R_N)$. Therefore $\EE\big[\mathrm{deg}_{E_N}(o)\big] \leq \delta$ and $\cal C (o,R_N) - E_N$ has connected components of size at most $k$ almost surely.

We can then compute \begin{align*}
    \EE\Big[\frac{|E_N|}{|o|_{R_N}}\;&\Big|\; {|o|_{R_N}}\geq (1-\eps)|V(G_N)|\Big]\\
    &\leq \frac{1}{(1-\eps)|V(G_N)|} \expect[\Big]{\frac{1}{2}\sum_{v\in  V(\cal C(o,R_N))} \mathrm{deg}_{E_N}(v) |{|o|_{R_N}\geq (1-\eps)|V(G_N)|}} \\
    &\leq\frac{1}{2 (1 - \eps)} \expect[\Big]{\mathrm{deg}_{E_N}(o) | {|o|_{R_N}\geq(1-\eps)|V(G_N)|}}
    \end{align*}
    by the conditioning assumption and mass transport. 
    A further upper bound
    \[
     \EE\Big[\frac{|E_N|}{|o|_{R_N}}\;\Big|\; {|o|_{R_N}}\geq (1-\eps)|V(G_N)|\Big]\leq\frac{1}{2 (1-\eps)} \frac{\expect[\big]{\mathrm{deg}_{E_n}(o)}}{\PP\big[{|o|_{R_N}\geq (1-\eps)|V(G_N)|}\big]} \leq \frac{\delta}{2c(1 - \eps)} \leq \eps
    \]
    is established by using the general bound $\EE[X\,|\, A] \leq \EE[X]/\PP[A]$ for non-negative random variables.
    It then follows from the first moment principle that there exists a finite subset $A \subseteq V(G_N)$ with $|A| \geq (1-\eps)|V(G_N)|$ such that $G[A]$ is $(\eps,N)$-hyperfinite, contradicting Theorem \ref{thm:2} and showing the limit IRE $R$ is not amenable. 
\end{proof}

\section{Ideal Bernoulli Voronoi tessellations and questions}\label{sec:IBVTQ}
 
This paper grew out of a desire to better understand ideal Bernoulli Voronoi tessellations, the discrete analog of ideal Poisson Voronoi tessellations. The ideal Bernoulli Voronoi tessellation was first studied in the case of regular trees by Bhupatiraju \cite{bhu}.

Fix a left-invariant proper metric $d$ on $\Gamma$ (for example, the word metric associated to a finite generating set). The \textbf{Voronoi tessellation} associated to a configuration  $\omega \subseteq \Gamma$ is the ensemble of sets $\{V_\omega(g)\}_{g \in \omega}$, where
\[
    V_\omega(g) \coloneqq \{ h \in \Gamma \mid d(h, g) \leq d(h, g') \text{ for all } g' \in \omega \}.
\]
Note that the Voronoi tessellation is equivariantly defined (as the metric is left-invariant). It is not a genuine partition however, as the cells can have boundary points in common. It is easy to refine the Voronoi tessellation to a genuine partition whilst maintaining equivariance: for example, each point on the boundary of a cell (that is, whose minimal distance to $\omega$ is achieved by multiple points of $\omega$) simply picks one at random. There are other methods to resolve the boundaries, we fix one and continue to use the above notation for the refined version.

The \textbf{Bernoulli Voronoi tessellation} $\BVT_p$ with parameter $p \in (0,1)$ is the Voronoi tessellation of the Bernoulli random subset $\Pi_p \subseteq \Gamma$ (that is, each $g \in \Gamma$ is independently in $\Pi_p$ with probability $p$). 
$\BVT_p$ is an example of a finite IRE as implied by the following lemma.

\begin{lemma}
    Let $\Pi$ be a $\Gamma$-invariant random subset of $\Gamma$ with intensity $\PP[e \in \Gamma] > 0$. Suppose $C(g, \Pi) \subseteq \Gamma$ is a family of measurably defined subsets for each $g \in \Pi$ such that
    \begin{itemize}
        \item (Equivariance) $C( hg, h \Pi) = h. C(g, \Pi)$ for all $h \in \Gamma$, and
        \item (Disjointness) $C(g, \Pi) \cap C(g', \Pi) = \empt$ for all distinct $g,g' \in \Pi$.
    \end{itemize}
    Then $\Pi$ almost surely, every cell $C(g, \Pi)$ is finite.
\end{lemma}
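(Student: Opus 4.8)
The plan is a one-line mass transport computation followed by a routine bootstrapping to all cells. Consider the diagonally invariant function $f(x,y;\Pi)\coloneqq\one_{y\in\Pi}\,\one_{x\in C(y,\Pi)}$ for $x,y\in\Gamma$; diagonal invariance, $f(\gamma x,\gamma y;\gamma.\Pi)=f(x,y;\Pi)$, is immediate from the equivariance hypothesis $C(\gamma g,\gamma\Pi)=\gamma C(g,\Pi)$ (note that $f$ is only ever evaluated with the factor $\one_{y\in\Pi}$ present, so the partially-defined cell $C(y,\Pi)$ causes no trouble). Writing $F(x,y)\coloneqq\EE[f(x,y;\Pi)]$, the mass transport principle for $\Gamma$ yields $\sum_{\gamma\in\Gamma}F(e,\gamma)=\sum_{\gamma\in\Gamma}F(\gamma,e)$.

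I would then evaluate both sides. By the disjointness hypothesis, in every configuration each point of $\Gamma$ lies in at most one cell, so $\sum_{\gamma\in\Gamma}F(e,\gamma)=\EE\big[\sum_{\gamma\in\Pi}\one_{e\in C(\gamma,\Pi)}\big]\le 1$. On the other hand $\sum_{\gamma\in\Gamma}F(\gamma,e)=\EE\big[\one_{e\in\Pi}\,|C(e,\Pi)|\big]$. Combining these gives $\EE[\one_{e\in\Pi}\,|C(e,\Pi)|]\le 1<\infty$, hence $|C(e,\Pi)|<\infty$ almost surely on the event $\{e\in\Pi\}$.

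Finally I would upgrade this to all cells simultaneously: the event $\{\exists\,g\in\Pi:|C(g,\Pi)|=\infty\}$ is the countable union over $g\in\Gamma$ of the events $\{g\in\Pi,\ |C(g,\Pi)|=\infty\}$, and each of the latter is null — apply $g^{-1}$ and use the $\Gamma$-invariance of $\Pi$ together with the (cardinality-preserving) equivariance of the family $C(\cdot\,,\cdot)$ to identify it with $\{e\in\Pi,\ |C(e,\Pi)|=\infty\}$, which we have just shown to be null. The only step that needs any care is verifying the diagonal invariance and well-definedness of $f$, i.e. correctly tracking how the left shift acts on $\Pi$ and on the cells $C(g,\Pi)$; everything else is routine, and the positivity of the intensity is in fact not needed for the argument (if $\Pi$ is empty almost surely the conclusion is vacuous).
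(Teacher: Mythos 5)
Your proof is correct and is essentially the paper's own argument: the same mass transport with $f(x,y;\Pi)=\one_{x\in C(y,\Pi)}$ (you just make the implicit factor $\one_{y\in\Pi}$ explicit), the same bound $\EE[\one_{e\in\Pi}|C(e,\Pi)|]\le 1$ via disjointness, and the same countability-plus-invariance bootstrap to all cells. Your side remark that positivity of the intensity is not actually needed is also accurate.
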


\begin{proof}
    We get that $\EE\big[\abs{C(e, \Pi)}\one_{e \in \Pi}\big] \leq 1$ by applying mass transport with the function $f(x,y;\Pi) = \one_{x \in C(y,\Pi)}$ . It follows that $\abs{C(e, \Pi)}\one_{e \in \Pi}$ is finite almost surely, and therefore $\PP\big[g \in \Pi \text{ and } C(g, \Pi) \text{ is infinite}\big] = 0$ for each $x \in \Gamma$ by invariance.

    Since $\Gamma$ is countable, almost surely for every $g\in \Gamma$ either $g\not \in \Pi$ or the cell $C(g,\Pi)$ is finite.
\end{proof}

For a nonamenable group $\Gamma$, any subsequential weak limit of $\BVT_p$ defines a nontrivial IRE on $\Gamma$. Indeed, the nontriviality follows from Kechris \cite[Theorem 15.7] {KechrisSpaceOfEqRels}. We call these subsequential weak limits \emph{ideal Bernoulli Voronoi tessellations}. Let us state some basic questions about them.

One step in the proof of fixed price for higher rank semisimple Lie groups \cite{IPVTHighRank} consists of proving that the ideal Poisson Voronoi tessellation is an amenable IRE. It is natural to raise the question of amenability in general.

\begin{question}
    Are ideal Bernoulli Voronoi tessellations amenable as IREs?
\end{question}

It follows from Theorem \ref{thm:exactWFIREisAIRE} that for exact groups the answer is positive. However, Theorem \ref{thm:3} shows that it is not sufficient to be a weak limit of FIREs in order to be amenable and so the general question remains open.

In the continuous setting, it is known that there is a unique limit for the  ideal Poisson Voronoi tessellation on hyperbolic space \cite{russetal} and more general symmetric spaces \cite{IPVTHighRank}. Bhupatiraju \cite{bhu} showed that the limit is unique for regular trees.

\begin{question}
    In what generality can one say that there is a \emph{unique} ideal Bernoulli Voronoi tessellation on a group? 
\end{question}

We may view $\cal{E}$ as a subset of $(2^\Gamma)^\Gamma$ by identifying each $r \in \cal{E}$ with the function $x \mapsto [x]_r$. A \textbf{cell selection rule} is a measurable and equivariant map $\phi : \cal{E} \to (2^\Gamma)^\Gamma$ such that for all $x \in \Gamma$, either $\phi_r(x) = [x]_r$ or $\phi_r(x) = \empt$. That is, $\phi$ is a measurable way of selecting some of the cells of equivalence relations. A cell selection rule is \textbf{trivial} if it selects all cells, or selects no cells. An IRE $\rho$ is \textbf{indistinguishable} it admits no nontrivial cell selection rules.
The second author \cite{indiSam} recently showed that the ideal Poisson Voronoi tessellation on symmetric spaces is indistinguishable. 

\begin{question}
    Are the ideal Bernoulli Voronoi tessellations always indistinguishable? Under what conditions is a weak limit of finite IREs indistinguishable?
\end{question}

\def\MR#1{}
\bibliographystyle{amsalpha} 
\bibliography{bib}

@article {osajda,
    AUTHOR = {Osajda, Damian},
     TITLE = {Small cancellation labellings of some infinite graphs and
              applications},
   JOURNAL = {Acta Math.},
  FJOURNAL = {Acta Mathematica},
    VOLUME = {225},
      YEAR = {2020},
    NUMBER = {1},
     PAGES = {159--191},
      ISSN = {0001-5962,1871-2509},
   MRCLASS = {05C63 (05C78 20F06 20F65 57M15)},
  MRNUMBER = {4176066},
MRREVIEWER = {Jingyin\ Huang},
       DOI = {10.4310/acta.2020.v225.n1.a3},
       URL = {https://doi.org/10.4310/acta.2020.v225.n1.a3},
}

@book {Bro,
    AUTHOR = {Brown, Nathanial P. and Ozawa, Narutaka},
     TITLE = {{$C^*$}-algebras and finite-dimensional approximations},
    SERIES = {Graduate Studies in Mathematics},
    VOLUME = {88},
 PUBLISHER = {American Mathematical Society, Providence, RI},
      YEAR = {2008},
     PAGES = {xvi+509},
      ISBN = {978-0-8218-4381-9; 0-8218-4381-8},
   MRCLASS = {46L05 (43A07 46-02 46L10)},
  MRNUMBER = {2391387},
MRREVIEWER = {Mikael\ R\o rdam},
       DOI = {10.1090/gsm/088},
       URL = {https://doi.org/10.1090/gsm/088},
}

@article {ULA,
    AUTHOR = {Brodzki, Jacek and Niblo, Graham A. and \v{S}pakula, J\'an and
              Willett, Rufus and Wright, Nick},
     TITLE = {Uniform local amenability},
   JOURNAL = {J. Noncommut. Geom.},
  FJOURNAL = {Journal of Noncommutative Geometry},
    VOLUME = {7},
      YEAR = {2013},
    NUMBER = {2},
     PAGES = {583--603},
      ISSN = {1661-6952,1661-6960},
   MRCLASS = {46L85 (05Cxx 30L05 43A07)},
  MRNUMBER = {3054308},
MRREVIEWER = {Hang\ Wang},
       DOI = {10.4171/JNCG/128},
       URL = {https://doi.org/10.4171/JNCG/128},
}

@article {ElekUla,
    AUTHOR = {Elek, G\'abor},
     TITLE = {Uniform local amenability implies property {A}},
   JOURNAL = {Proc. Amer. Math. Soc.},
  FJOURNAL = {Proceedings of the American Mathematical Society},
    VOLUME = {149},
      YEAR = {2021},
    NUMBER = {6},
     PAGES = {2573--2577},
      ISSN = {0002-9939,1088-6826},
   MRCLASS = {46L99 (05C63 20F65 51F99)},
  MRNUMBER = {4246807},
       DOI = {10.1090/proc/15387},
       URL = {https://doi.org/10.1090/proc/15387},
}

@article {schramm,
    AUTHOR = {Schramm, Oded},
     TITLE = {Hyperfinite graph limits},
   JOURNAL = {Electron. Res. Announc. Math. Sci.},
  FJOURNAL = {Electronic Research Announcements in Mathematical Sciences},
    VOLUME = {15},
      YEAR = {2008},
     PAGES = {17--23},
      ISSN = {1935-9179},
   MRCLASS = {60C05 (05C80)},
  MRNUMBER = {2372897},
MRREVIEWER = {Lyuben\ R.\ Mutafchiev},
}

@phdthesis{TDthesis,
  title={Descriptive set theory and the ergodic theory of countable groups},
  author={Tucker-Drob, Robin Daniel},
  year={2013},
  school={California Institute of Technology}
}

@article{BCP,
      title={On Cheeger constants of hyperbolic surfaces}, 
      author={Thomas Budzinski and Nicolas Curien and Bram Petri},
      year={2022},
      eprint={2207.00469},
      archivePrefix={arXiv},
      primaryClass={math.GT},
      url={https://arxiv.org/abs/2207.00469},
journal={\href{https://arxiv.org/abs/2207.00469}{arXiv:2207.00469}}, 
}

@article {JKL,
    AUTHOR = {Jackson, Steve and Kechris, Alexander S. and Louveau, Alain},
     TITLE = {Countable {B}orel equivalence relations},
   JOURNAL = {J. Math. Log.},
  FJOURNAL = {Journal of Mathematical Logic},
    VOLUME = {2},
      YEAR = {2002},
    NUMBER = {1},
     PAGES = {1--80},
      ISSN = {0219-0613,1793-6691},
   MRCLASS = {03E15 (22A05)},
  MRNUMBER = {1900547},
MRREVIEWER = {Otmar\ Spinas},
       DOI = {10.1142/S0219061302000138},
       URL = {https://doi.org/10.1142/S0219061302000138},
}

@article{CGMTD,
      title={ONE-ENDED SPANNING SUBFORESTS AND TREEABILITY OF
 GROUPS}, 
      author={Clinton Conley and Damien Gaboriau and Andrew Marks and Robin Tucker-Drob},
      year={2021},
      eprint={2104.07431},
      archivePrefix={arXiv},
      primaryClass={math.GR},
      url={https://arxiv.org/abs/2104.07431}, journal={\href{https://arxiv.org/abs/2104.07431}{arXiv:2104.07431}}, 
}

@incollection {Moore:AmenableRelations,
    AUTHOR = {Moore, Justin T.},
     TITLE = {A brief introduction to amenable equivalence relations},
 BOOKTITLE = {Trends in set theory},
    SERIES = {Contemp. Math.},
    VOLUME = {752},
     PAGES = {153--163},
 PUBLISHER = {Amer. Math. Soc., [Providence], RI},
      YEAR = {[2020] \copyright 2020},
      ISBN = {978-1-4704-4332-0},
   MRCLASS = {43A07 (20F65)},
  MRNUMBER = {4132105},
       DOI = {10.1090/conm/752/15134},
}

@phdthesis{bhu,
  title={The Low-Intensity Limit of Bernoulli–Voronoi and Poisson–
Voronoi measures.},
  author={Bhupatiraju, Sandeep},
  year={2010},
  school={Indiana University, Bloomington}
}

@article{IPVTHighRank,
  title={Poisson-Voronoi tessellations and fixed price in higher rank},
  author={Frączyk, Miko{\l}aj and Mellick, Sam and Wilkens, Amanda},
journal={\href{https://arxiv.org/abs/2307.01194}{arXiv:2307.01194}},
  year={2023}
}

@article{dachille,
      title={Ideal Poisson--Voronoi tessellations beyond hyperbolic spaces}, 
      author={Matteo D'Achille},
      year={2024},
      eprint={2412.00822},
      archivePrefix={arXiv},
      primaryClass={math.PR},
journal={\href{https://arxiv.org/abs/2412.00822}{arXiv:2412.00822}},
      url={https://arxiv.org/abs/2412.00822}, 
}

@article{indiSam,
      title={Indistinguishability of cells for the ideal Poisson Voronoi tessellation}, 
      author={Sam Mellick},
      year={2024},
      eprint={2408.10009},
      archivePrefix={arXiv},
      primaryClass={math.PR},
      url={https://arxiv.org/abs/2408.10009},
journal={\href{https://arxiv.org/abs/2408.10009}{arXiv:2408.10009}},
}

@article{russetal,
      title={Ideal Poisson-Voronoi tessellations on hyperbolic spaces}, 
      author={Matteo D'Achille and Nicolas Curien and Nathanaël Enriquez and Russell Lyons and Meltem Ünel},
      year={2023},
      eprint={2303.16831},
      archivePrefix={arXiv},
      primaryClass={math.PR},
      url={https://arxiv.org/abs/2303.16831},
journal={\href{https://arxiv.org/abs/2303.16831}{arXiv:2303.16831}}, 
}

@book {Lovasz,
    AUTHOR = {Lov\'asz, L\'aszl\'o},
     TITLE = {Large networks and graph limits},
    SERIES = {American Mathematical Society Colloquium Publications},
    VOLUME = {60},
 PUBLISHER = {American Mathematical Society, Providence, RI},
      YEAR = {2012},
     PAGES = {xiv+475},
      ISBN = {978-0-8218-9085-1},
   MRCLASS = {05-02 (05C60 05C80 05C82 05D40)},
  MRNUMBER = {3012035},
MRREVIEWER = {Anant\ P.\ Godbole},
       DOI = {10.1090/coll/060},
       URL = {https://doi.org/10.1090/coll/060},
}

@article {AldousLyons,
    AUTHOR = {Aldous, David and Lyons, Russell},
     TITLE = {Processes on unimodular random networks},
   JOURNAL = {Electron. J. Probab.},
  FJOURNAL = {Electronic Journal of Probability},
    VOLUME = {12},
      YEAR = {2007},
     PAGES = {no. 54, 1454--1508},
      ISSN = {1083-6489},
   MRCLASS = {60C05 (05C80 60G50)},
  MRNUMBER = {2354165},
MRREVIEWER = {Jean-Fran\c cois\ Delmas},
       DOI = {10.1214/EJP.v12-463},
       URL = {https://doi.org/10.1214/EJP.v12-463},
}

@article {Grom:RWRG,
    AUTHOR = {Gromov, Misha},
     TITLE = {Random walk in random groups},
   JOURNAL = {Geom. Funct. Anal.},
  FJOURNAL = {Geometric and Functional Analysis},
    VOLUME = {13},
      YEAR = {2003},
    NUMBER = {1},
     PAGES = {73--146},
      ISSN = {1016-443X,1420-8970},
   MRCLASS = {20F65 (20F67 20P05 60G50)},
  MRNUMBER = {1978492},
MRREVIEWER = {Thomas\ Delzant},
       DOI = {10.1007/s000390300002},
       URL = {https://doi.org/10.1007/s000390300002},
}

@incollection {Grom:SpacesAndQuestions,
    AUTHOR = {Gromov, Misha},
     TITLE = {Spaces and questions},
      NOTE = {GAFA 2000 Special Volume (Tel Aviv, 1999)},
   JOURNAL = {Geom. Funct. Anal.},
  FJOURNAL = {Geometric and Functional Analysis},
      YEAR = {2000},
     PAGES = {118--161},
      ISSN = {1016-443X,1420-8970},
   MRCLASS = {53C23 (53-02 57-02)},
  MRNUMBER = {1826251},
MRREVIEWER = {Mario\ Bonk},
}

@article{KechrisSpaceOfEqRels,
  title={The spaces of measure preserving equivalence relations and graphs},
  author={Kechris, Alexander S.},
    URL={https://pma.caltech.edu/documents/6021/space_of_equivalence_relations_012book.pdf},
  journal={preprint},
  volume={2021},
  number={16},
  pages={123},
  year={2017}
}

@article {CFW,
    AUTHOR = {Connes, Alain and Feldman, Jacob and Weiss, Benjamin},
     TITLE = {An amenable equivalence relation is generated by a single
              transformation},
   JOURNAL = {Ergodic Theory Dynam. Systems},
  FJOURNAL = {Ergodic Theory and Dynamical Systems},
    VOLUME = {1},
      YEAR = {1981},
    NUMBER = {4},
     PAGES = {431--450},
      ISSN = {0143-3857,1469-4417},
   MRCLASS = {46L55 (22D40)},
  MRNUMBER = {662736},
       DOI = {10.1017/s014338570000136x},
       URL = {https://doi.org/10.1017/s014338570000136x},
}

@book {KM,
    AUTHOR = {Kechris, Alexander S. and Miller, Benjamin D.},
     TITLE = {Topics in orbit equivalence},
    SERIES = {Lecture Notes in Mathematics},
    VOLUME = {1852},
 PUBLISHER = {Springer-Verlag, Berlin},
      YEAR = {2004},
     PAGES = {x+134},
      ISBN = {3-540-22603-6},
   MRCLASS = {37A15 (03E15 28D15 37A20)},
  MRNUMBER = {2095154},
MRREVIEWER = {Bachir\ Bekka},
       DOI = {10.1007/b99421},
}

@article {Elek:FinGraphsAmen,
    AUTHOR = {Elek, G\'abor},
     TITLE = {Finite graphs and amenability},
   JOURNAL = {J. Funct. Anal.},
  FJOURNAL = {Journal of Functional Analysis},
    VOLUME = {263},
      YEAR = {2012},
    NUMBER = {9},
     PAGES = {2593--2614},
      ISSN = {0022-1236,1096-0783},
   MRCLASS = {05C65 (05C70 05C75)},
  MRNUMBER = {2967301},
MRREVIEWER = {Vania\ D.\ Mascioni},
       DOI = {10.1016/j.jfa.2012.08.021},
}

@article {AdamsRelHyp,
    AUTHOR = {Adams, S.},
     TITLE = {Boundary amenability for word hyperbolic groups and an
              application to smooth dynamics of simple groups},
   JOURNAL = {Topology},
  FJOURNAL = {Topology. An International Journal of Mathematics},
    VOLUME = {33},
      YEAR = {1994},
    NUMBER = {4},
     PAGES = {765--783},
      ISSN = {0040-9383},
   MRCLASS = {58F11 (20F32 22E99 57M50)},
  MRNUMBER = {1293309},
MRREVIEWER = {Ralf\ J.\ Spatzier},
       DOI = {10.1016/0040-9383(94)90007-8},
       URL = {https://doi.org/10.1016/0040-9383(94)90007-8},
}

@article {GHW,
    AUTHOR = {Guentner, Erik and Higson, Nigel and Weinberger, Shmuel},
     TITLE = {The {N}ovikov conjecture for linear groups},
   JOURNAL = {Publ. Math. Inst. Hautes \'Etudes Sci.},
  FJOURNAL = {Publications Math\'ematiques. Institut de Hautes \'Etudes
              Scientifiques},
    NUMBER = {101},
      YEAR = {2005},
     PAGES = {243--268},
      ISSN = {0073-8301,1618-1913},
   MRCLASS = {19K56 (46L80 58J22)},
  MRNUMBER = {2217050},
MRREVIEWER = {Jean-Louis\ Tu},
       DOI = {10.1007/s10240-005-0030-5},
       URL = {https://doi.org/10.1007/s10240-005-0030-5},
}

@article {yu,
    AUTHOR = {Yu, Guoliang},
     TITLE = {The coarse {B}aum-{C}onnes conjecture for spaces which admit a
              uniform embedding into {H}ilbert space},
   JOURNAL = {Invent. Math.},
  FJOURNAL = {Inventiones Mathematicae},
    VOLUME = {139},
      YEAR = {2000},
    NUMBER = {1},
     PAGES = {201--240},
      ISSN = {0020-9910,1432-1297},
   MRCLASS = {19K56 (46L80 57R67 58J22)},
  MRNUMBER = {1728880},
MRREVIEWER = {Nigel\ Higson},
       DOI = {10.1007/s002229900032},
       URL = {https://doi.org/10.1007/s002229900032},
}

@article {kw,
    AUTHOR = {Kirchberg, Eberhard and Wassermann, Simon},
     TITLE = {Exact groups and continuous bundles of {$C^*$}-algebras},
   JOURNAL = {Math. Ann.},
  FJOURNAL = {Mathematische Annalen},
    VOLUME = {315},
      YEAR = {1999},
    NUMBER = {2},
     PAGES = {169--203},
      ISSN = {0025-5831,1432-1807},
   MRCLASS = {46L05 (22D05 46M20)},
  MRNUMBER = {1721796},
MRREVIEWER = {Michael\ Frank},
       DOI = {10.1007/s002080050364},
       URL = {https://doi.org/10.1007/s002080050364},
}

@article {anan,
    AUTHOR = {Anantharaman-Delaroche, Claire},
     TITLE = {Amenability and exactness for dynamical systems and their
              {$C^\ast$}-algebras},
   JOURNAL = {Trans. Amer. Math. Soc.},
  FJOURNAL = {Transactions of the American Mathematical Society},
    VOLUME = {354},
      YEAR = {2002},
    NUMBER = {10},
     PAGES = {4153--4178},
      ISSN = {0002-9947,1088-6850},
   MRCLASS = {46L55 (22D25 43A07)},
  MRNUMBER = {1926869},
MRREVIEWER = {Berndt\ Brenken},
       DOI = {10.1090/S0002-9947-02-02978-1},
       URL = {https://doi.org/10.1090/S0002-9947-02-02978-1},
}

@article {ozawa,
    AUTHOR = {Ozawa, Narutaka},
     TITLE = {Amenable actions and exactness for discrete groups},
   JOURNAL = {C. R. Acad. Sci. Paris S\'er. I Math.},
  FJOURNAL = {Comptes Rendus de l'Acad\'emie des Sciences. S\'erie I.
              Math\'ematique},
    VOLUME = {330},
      YEAR = {2000},
    NUMBER = {8},
     PAGES = {691--695},
      ISSN = {0764-4442},
   MRCLASS = {22D25 (43A35 46L06 46L55)},
  MRNUMBER = {1763912},
MRREVIEWER = {Alain\ Valette},
       DOI = {10.1016/S0764-4442(00)00248-2},
       URL = {https://doi.org/10.1016/S0764-4442(00)00248-2},
}

@article {HR,
    AUTHOR = {Higson, Nigel and Roe, John},
     TITLE = {Amenable group actions and the {N}ovikov conjecture},
   JOURNAL = {J. Reine Angew. Math.},
  FJOURNAL = {Journal f\"ur die Reine und Angewandte Mathematik. [Crelle's
              Journal]},
    VOLUME = {519},
      YEAR = {2000},
     PAGES = {143--153},
      ISSN = {0075-4102,1435-5345},
   MRCLASS = {57R99 (19J25 46L87 54H11)},
  MRNUMBER = {1739727},
MRREVIEWER = {Erik\ K.\ Pedersen},
       DOI = {10.1515/crll.2000.009},
       URL = {https://doi.org/10.1515/crll.2000.009},
}

@article {GJ,
    AUTHOR = {Guentner, Erik and Kaminker, Jerome},
     TITLE = {Exactness and the {N}ovikov conjecture},
   JOURNAL = {Topology},
  FJOURNAL = {Topology. An International Journal of Mathematics},
    VOLUME = {41},
      YEAR = {2002},
    NUMBER = {2},
     PAGES = {411--418},
      ISSN = {0040-9383},
   MRCLASS = {46L05 (58B34)},
  MRNUMBER = {1876896},
MRREVIEWER = {\'Eric\ Leichtnam},
       DOI = {10.1016/S0040-9383(00)00036-7},
       URL = {https://doi.org/10.1016/S0040-9383(00)00036-7},
}

@unpublished{anansurvey,
  TITLE = {{Amenability and exactness for groups, group actions and operator algebras}},
  AUTHOR = {Anantharaman-Delaroche, Claire},
  URL = {https://cel.hal.science/cel-00360390},
  NOTE = {Lecture},
  TYPE = {{\'E}cole th{\'e}matique},
  ADDRESS = {Erwin Schr{\"o}dinger Institute, Mars 2007},
  YEAR = {2007},
  MONTH = Dec,
  PDF = {https://cel.hal.science/cel-00360390v1/file/ESI-Cours.pdf},
  HAL_ID = {cel-00360390},
  HAL_VERSION = {v1},
}

@Article{OW80,
  author    = {Ornstein, Donald S. and Weiss, Benjamin},
  title     = {Ergodic theory of amenable group actions. I: The Rohlin lemma},
  journal   = {Bull. Amer. Math. Soc. (N.S.)},
  year      = {1980},
  volume    = {2},
  number    = {1},
  pages     = {161--164},
  month     = {01},
  fjournal  = {Bulletin (New Series) of the American Mathematical Society},
  publisher = {American Mathematical Society},
}

@article {AbertGlasnerVirag,
    AUTHOR = {Ab\'ert, Mikl\'os and Glasner, Yair and Vir\'ag, B\'alint},
     TITLE = {Kesten's theorem for invariant random subgroups},
   JOURNAL = {Duke Math. J.},
  FJOURNAL = {Duke Mathematical Journal},
    VOLUME = {163},
      YEAR = {2014},
    NUMBER = {3},
     PAGES = {465--488},
      ISSN = {0012-7094,1547-7398},
   MRCLASS = {20F69 (05C25 05C81 35P20 53C24)},
  MRNUMBER = {3165420},
}

@article {SevenSamurai,
    AUTHOR = {Ab\'ert, Mikl\'os and Bergeron, Nicolas and Biringer, Ian and
              Gelander, Tsachik and Nikolov, Nikolay and Raimbault, Jean and
              Samet, Iddo},
     TITLE = {On the growth of {$L^2$}-invariants for sequences of lattices
              in {L}ie groups},
   JOURNAL = {Ann. of Math. (2)},
  FJOURNAL = {Annals of Mathematics. Second Series},
    VOLUME = {185},
      YEAR = {2017},
    NUMBER = {3},
     PAGES = {711--790},
      ISSN = {0003-486X,1939-8980},
   MRCLASS = {22E40 (22D40 22E30 58J35)},
  MRNUMBER = {3664810},
}

\end{document}